\newtheorem{example*}{Example}
\numberwithin{equation}{section}
\newtheorem{remark}{Remark}
\newcommand{\bb}{\mathbb}
\newcommand{\mb}{\boldsymbol}
\renewcommand{\bm}{\boldsymbol}
\newcommand{\mc}{\mathcal}
\newcommand{\mcb}[1]{\mb{\mc{#1}}}
\newcommand{\norm}[2]{\left\| #1 \right\|_{#2}}
\newcommand{\reals}{\mathbb{R}}
\newcommand{\<}{\langle}
\renewcommand{\>}{\rangle}
\newcommand{\innerprod}[2]{\left\< #1, #2 \right\>}
\newcommand{\set}[1]{\left\{ #1 \right\}}
\newcommand{\eps}{\varepsilon}
\newcommand{\tensor}{\otimes}
\newcommand{\Tensor}{\bigotimes}
\renewcommand{\P}{\mathbb{P}}
\newcommand{\prob}[2][]{\P_{#1}\left[ \, #2 \, \right] }
\DeclareMathOperator*{\argmin}{arg\,min}
\DeclareMathOperator*{\argmax}{arg\,max}
\newcommand{\wh}{\widehat}
\renewcommand\t{{\ensuremath{\scriptscriptstyle{\top}}}}
\newcommand\Parens[1]{\mleft(#1\mright)}
\newcommand\Braces[1]{\mleft\{#1\mright\}}
\newcommand\Abs[1]{\mleft|#1\mright|}
\newcommand\ind[1]{\ensuremath{\mathds{1}\{#1\}}}
\newcommand\qed{\vbox{\hrule height0.6pt\hbox{%
   \vrule height1.3ex width0.6pt\hskip0.8ex
   \vrule width0.6pt}\hrule height0.6pt}}
\newcommand\citep\cite
\newcommand\citet\cite
\numberwithin{equation}{section}
\title{Successive Rank-One Approximations FOR Nearly Orthogonally Decomposable Symmetric Tensors}
\author{Cun Mu\footnotemark[1]
  \and Daniel Hsu\footnotemark[2]
  \and Donald Goldfarb\footnotemark[1]}
\begin{document}
\maketitle
\slugger{simax}{xxxx}{xx}{x}{x--x}

{\renewcommand{\thefootnote}{\fnsymbol{footnote}}%
\footnotetext[1]{Department of Industrial Engineering and Operations Research, Columbia University (\url{cm3052@columbia.edu}, \url{goldfarb@columbia.edu})}
\footnotetext[2]{Department of Computer Science, Columbia University, \url{djhsu@cs.columbia.edu}}
}

\begin{abstract}
  Many idealized problems in signal processing, machine learning and
  statistics can be reduced to the problem of finding the symmetric
  canonical decomposition of an underlying symmetric and orthogonally
  decomposable (SOD) tensor.
  Drawing inspiration from the matrix case, the successive rank-one
  approximations (SROA) scheme has been proposed and shown to yield
  this tensor decomposition exactly, and a plethora of numerical methods
  have thus been developed for the tensor rank-one approximation
  problem.
  In practice, however, the inevitable errors (say) from estimation,
  computation, and modeling necessitate that the input tensor can only be
  assumed to be a nearly SOD tensor---i.e., a symmetric tensor
  slightly perturbed from the underlying SOD tensor.
  This article shows that even in the presence of perturbation, SROA
  can still robustly recover the symmetric canonical decomposition of
  the underlying tensor.
  It is shown that when the perturbation error is small enough, the
  approximation errors do not accumulate with the iteration number.
  Numerical results are presented to support the theoretical findings.
\end{abstract}

\begin{keywords}
tensor decomposition, rank-1 tensor approximation, orthogonally decomposable tensor, perturbation analysis
\end{keywords}

\begin{AMS}
15A18, 15A69, 49M27, 62H25
\end{AMS}

\pagestyle{myheadings}
\thispagestyle{plain}
\markboth{C. MU, D. HSU, AND D. GOLDFARB}{RANK-1 APPROXIMATIONS FOR STRUCTURED TENSORS}

\section{Introduction}
The eigenvalue decomposition of symmetric matrices is one of the most
important discoveries in mathematics with an abundance of
applications across all disciplines of science and engineering. One
way to explain such a decomposition is to express the symmetric matrix
as a minimal sum of rank-one symmetric matrices. It is well known that
the eigenvalue decomposition can be simply obtained via {\em successive
rank-one approximation (SROA)}.
Specifically, for a symmetric matrix $\mb X \in \reals^{n \times n}$
with rank $r$, one approximates $\mb X$ by a rank-one matrix to
minimize the Frobenius norm error:
\begin{flalign}\label{eqn:matrix_rank_one}
(\lambda_1, \bm x_1) \in \argmin_{\lambda \in \reals, \norm{\bm x}{} =
1} \norm{\mb X - \lambda \bm x \bm x^\t}{F} ;
\end{flalign}
then, one approximates the residual $\mb X - \lambda_1 \bm x_1 \bm
x_1^\t$ by another rank-one matrix $\lambda_2\bm x_2 \bm x_2^\t$,
and so on.
The above procedure continues until  one has found $r$ rank-one
matrices $(\lambda_i \bm x_i \bm x_i^\t)_{i=1}^r$; their
summation, $\sum_{i=1}^r \lambda_i  \bm x_i \bm x_i^\t$, yields an
eigenvalue decomposition of $\bm X$.
Moreover, due to the optimal approximation property of the
eigendecomposition, for any positive integer $k \le r$, the best
rank-$k$ approximation (in the sense of either the Frobenius norm or the
operator norm) to $\mb X$ is simply given by $\sum_{i=1}^k
\lambda_i \bm x_i \bm x_i^\t$ \cite{eckart1936approx}.

In this article, we study decompositions of higher-order
symmetric tensors, a natural generalization of symmetric matrices.
Many applications in signal processing, machine learning, and
statistics, involve higher-order interactions in data; in these cases,
higher-order tensors formed from the data are the primary objects of
interest.
A tensor $\mcb T \in \Tensor_{i=1}^p \reals^{n_i} := \reals^{n_1\times
n_2 \times \dotsb \times n_p}$ of order $p$ is called symmetric if
$n_1 = n_2 = \dotsb = n_p = n$ and its entries are invariant under any
permutation of their indices.
Symmetric tensors of order two ($p=2$) are symmetric matrices.
In the sequel, we reserve the term \emph{tensor} (without any further
specifiction) for tensors of order $p\ge3$.
A symmetric rank-one tensor can be naturally defined as a $p$-fold
outer product
\[
  \bm v^{\tensor p} := \underbrace{\bm v \tensor \bm v \tensor \cdots
  \tensor \bm v}_{\text{$p$ times}} ,
\]
where $\bm v \in \reals^n$ and $\tensor$ denotes the outer product
between vectors.\footnote{%
  For any $1\le i_1,i_2, \dotsc, i_p\le n$ and any $\bm v \in \bb
  R^n$, the $(i_1,i_2,\dotsc,i_p)$-th entry of $\bm v^{\tensor p}$ is
  $(\bm v^{\tensor p})_{i_1, i_2, \dotsc, i_p} = v_{i_1} v_{i_2} \dotsb
  v_{i_p}$.
}
The minimal number of rank-one symmetric tensors whose
sum is $\mcb T$ is called the {\em symmetric tensor rank} in the literature,
and any corresponding decomposition is called a {\em symmetric canonical
decomposition} \cite{harshman70parafac}.
Such decompositions have applications in many scientific and
engineering
domains~\cite{McCullagh87tensor,comon1994independent, smilde2004chemical, kolda2009tensor, comon2010handbook,JMLR:v15:anandkumar14b}.


By analogy to the matrix case, successive rank-one approximations
schemes have been proposed for symmetric tensor decomposition \cite{zhang2001rank, kofidis2002best, kolda2005web, wang2007successive}.
Just as in the matrix case, one first approximates $\mcb T$ by a
symmetric rank-one tensor
\begin{flalign}\label{eqn:tensor_rank_one}
  (\lambda_1, \bm v_1) \in \argmin_{\lambda \in \reals, \norm{\bm v}{}
  = 1} \norm{\mcb T - \lambda \bm v^{\tensor p}}{F} ,
\end{flalign}
and then approximate the residual $\mcb T - \lambda_1 \bm v_1^{\tensor p}$
again by another symmetric rank-one tensor, and so on.
(The Frobenius norm $\norm{\cdot}{F}$ for tensors is defined later,
but is completely analogous to the matrix case.)
This process continues until a certain stopping criterion is met.
However, unlike symmetric matrices, the above procedure for higher
order tensors ($p \ge 3$) faces a number of {\em computational} and {\em
theoretical} challenges.

Unlike problem \eqref{eqn:matrix_rank_one}---which can be
solved efficiently using simple techniques such as power iterations---
solving the rank-one approximation to higher order tensors is much
more difficult: it is NP-hard, even for symmetric third-order
tensors~\cite{hillar2009most}.
Researchers in numerical linear algebra and numerical optimization
have devoted a great amount of effort to solve problem
\eqref{eqn:tensor_rank_one}.
Broadly speaking, existing methods for problem
\eqref{eqn:tensor_rank_one} can be categorized into three types.
First, as problem \eqref{eqn:tensor_rank_one} is equivalent to finding
the extreme value of a homogeneous polynomial over the unit sphere,
general-purpose polynomial solvers based on the Sum-of-Squares (SOS) framework
\cite{shor87poly,nesterov00squared,parrilo2000structured,lasserre2001global,parrilo2003semidefinite}, such as GloptiPoly 3
\cite{henrion2009gloptipoly} and SOSTOOLS \cite{sostools}, can be
effectively applied to the rank-one approximation problem.
The SOS approach can solve any polynomial problem to any given
accuracy through a sequence of semidefinite programs; however, the
size of these programs are very large for high-dimensional problems,
and hence these techniques are generally limited to relatively small-sized
problems.
The second approach is to treat problem \eqref{eqn:tensor_rank_one} as
a nonlinear program \cite{bertsekas1999nonlinear,
wright1999numerical}, and then to exploit and adapt the wealth of
ideas from numerical optimization.
The resulting methods---which include \cite{de2000best, zhang2001rank,
kofidis2002best, wang2007successive, kolda2011shifted,
han2012unconstrained, chen2012maximum, zhang2012best, l2014sequential}
to just name a few---are empirically efficient and scalable, but are
only guaranteed to reach a local optimum or stationary point of the
objective over the sphere.
Therefore, to maximize their performance, these methods need to run
with several starting points.
The final approach is based on the recent trend of relaxing seemingly
intractable optimzation problems such as
problem~\eqref{eqn:tensor_rank_one} with more tractable convex
optimization problems that can be efficiently
solved~\cite{jiang2012tensor, nie2013semidefinite,
yang2014properties}.
The tensor structure in~\eqref{eqn:tensor_rank_one} has made it
possible to design highly-tailored convex relaxations that appear to
be very effective.
For instance, the semidefinite relaxation approach in ~\cite{jiang2012tensor} was able to globally solve almost all the
randomly generated instances that they tested. {Aside from the above solvers, a few algorithms have been specifically designed for the scenario where some side information regarding the solution of \eqref{eqn:tensor_rank_one} is known. For example, when the signs of the optimizer $\bm v_1$ are revealed, polynomial time approximation schemes for solving \eqref{eqn:tensor_rank_one} are available \cite{ling2009biquadratic}.}

In contrast to the many active efforts and promising results on the computational side, the
theoretical properties of successive rank-one approximations are far
less developed.
Although SROA is justified for matrix eigenvalue decomposition, it is
known to fail for general tensors \cite{stegeman2010subtracting}.
Indeed, much has been established about the failings of low-rank
approximation concepts for tensors that are taken for granted in the
matrix
setting~\cite{kolda2001orthogonal,kolda2003counter,
stegeman2007degeneracy,stegeman2008low,silva2008illposed}.
For instance, the best rank-$r$ approximation to a general tensor is not even guaranteed to exist (though several
sufficient conditions for this existence have been recently proposed \cite{lim2010multiarray, lim2014blind}).
Nevertheless, SROA can be still justified for certain classes of
symmetric tensors that arise in applications.

\paragraph{Nearly SOD tensors} Indeed, in many applications (e.g., higher-order statistical
estimation~\cite{McCullagh87tensor}, independent component
analysis~\cite{comon1994independent, comon2010handbook}, and parameter
estimation for latent variable models \cite{JMLR:v15:anandkumar14b}),
the input tensor $\wh{\mcb T}$ may be fairly assumed to be a symmetric
tensor slightly perturbed from {\em a symmetric and orthogonally
decomposable (SOD) tensor}  $\mcb T$.
That is,
\[
  \wh{\mcb T} = \mcb T + \mcb E ,
\]
where the underlying SOD tensor may be written as $\mcb T =
\sum_{i=1}^r \lambda_i \bm v_i^{\tensor p}$ with $\innerprod{\bm
v_i}{\bm v_j} = \ind{i=j}$ for $1 \leq i, j \leq r$, and $\mcb E$
is a perturbation tensor.
In these aforementioned applications, we are interested in obtaining
the underlying pairs $\{(\lambda_i,\bm v_i)\}_{i=1}^r$.
When $\mcb E$ vanishes, it is known that $\sum_{i=1}^r \lambda_i \bm
v_i^{\tensor p}$ is the unique symmetric canonical
decomposition~\cite{harshman70parafac,kruskal77threeway}, and
moreover, successive rank-one approximation exactly recovers
$\{(\lambda_i, \bm v_i)\}_{i=1}^r$~\cite{zhang2001rank}.
However, because of the inevitable perturbation term $\mcb E$ arising
from sampling errors, noisy measurements, model misspecification,
numerical errors, and so on, it is crucial to understand {\em the behavior of SROA when $\mcb E \neq \mb 0$}.
In particular, one may ask if SROA provides an accurate approximation
to $\{(\lambda_i, \bm v_i)\}_{i=1}^r$.
If the answer is affirmative, then we can indeed take advantage of those
sophisticated numerical approaches to
solving~\eqref{eqn:tensor_rank_one} mentioned above for many practical
problems.
This is the focus of the present paper.

\paragraph{Algorithm-independent analysis} The recent work of~\cite{JMLR:v15:anandkumar14b} proposes a randomized
algorithm for approximating SROA based on the power method
of~\cite{de2000best}.
There, an error analysis specific to the proposed randomized algorithm (for the case $p=3$) shows that the decomposition
$\{(\lambda_i, \bm v_i)\}_{i=1}^r$ of $\mcb T$ can be approximately
recovered from $\wh{\mcb T}$ in polynomial time with high
probability---provided that the perturbation $\mcb E$ is sufficiently
small (roughly on the order of $1/n$ under a natural measure).
Our present aim is to provide a general analysis that is {\em independent}
of the specific approach used to obtain rank-one approximations and it seems to be beneficial.
Our analysis shows that the general SROA scheme in fact allows for
perturbations to be of the order $1/\sqrt[p-1]{n}$, suggesting advantages of using more sophisticated optimization
procedures and potentially more computational resources to solve each rank-one approximation step.

As motivation, we describe a simple and typical application from probabilistic
modeling where perturbations of SOD tensors naturally arise.
But first, we establish notations used throughout the paper, largely
borrowed from~\cite{lim2005eigenvalue}.

\paragraph{Notation}
A real $p$-th order $n$-dimensional tensor
$\mcb A \in \Tensor^p \reals^n := \reals^{n\times n \times \dotsb \times n}$,
\[
\mcb A = \left(\mc A_{i_1, i_2, \dotsc, i_p} \right), \;\; \mc A_{i_1,
i_2, \dotsc, i_p} \in \reals, \quad 1\le i_1, i_2, \dotsc, i_p \le n,
\]
is called \emph{symmetric} if its entries are invariant under any
permutation of their indices: for any $i_1, i_2, \dotsc, i_p \in [n]
:= \set{1,2,\dotsc,n}$,
\[
  \mc A_{i_1, i_2, \dotsc, i_p} = \mc A_{i_{\pi(1)}, i_{\pi(2)},
  \dotsc, i_{\pi(p)}}
\]
for any permutation mapping $\pi$ on $[p]$.

In addition to being considered as a multi-way array, a
tensor $\mcb A \in \Tensor^p \reals^n$ can also be interpreted as a
multilinear map in the following sense: for any matrices $\mb V_i \in
\reals^{n \times m_i}$ for $i \in [p]$, we interpret
$\mcb A(\mb V_1, \mb V_2, \dotsc, \mb V_p)$ as a tensor in
$\reals^{m_1\times m_2 \times \cdots \times m_p}$ whose $(i_1, i_2,
\dotsc, i_p)$-th entry is
\[
\left(\mcb A(\mb V_1, \mb V_2, \dotsc, \mb V_p)\right)_{i_1, i_2,
\dotsc, i_p} := \sum_{j_1, j_2, \dotsc, j_p \in [n]} \mcb A_{j_1, j_2,
\dotsc, j_p} (\mb V_1)_{j_1 i_1} (\mb V_2)_{j_2 i_2} \cdots  (\mb V_p)_{j_p i_p}.
\]
The following are special cases:
\begin{enumerate}
  \item $p=2$ (i.e., $\mcb A$ is an $n\times n$ symmetric matrix):
    \[
      \mcb A(\mb V_1, \mb V_2) = \mb V_1^\t \mcb A \mb V_2 \in
      \reals^{m_1 \times m_2}.
    \]

     \item For any $i_1, i_2, \dotsc, i_p \in [n]$,
    \[
      \mcb A(\bm e_{i_1}, \bm e_{i_2}, \dotsc, \bm e_{i_p}) = \mc A_{i_1, i_2,
      \dotsc, i_p},
    \]
    where $\bm e_i$ denotes the $i$-th standard basis of $\reals^n$ for any $i\in [n]$.

  \item $\mb V_i = \bm x \in \reals^n$ for all $i \in [p]$:
    \[
      \mcb A \bm x^{\tensor p} := \mcb A(\bm x, \bm x, \dotsc, \bm x)=
      \sum_{i_1,i_2, \dotsc, i_p \in [n]} \mc A_{i_1, i_2, \dotsc, i_p} \,
      x_{i_1} x_{i_2} \dotsb x_{i_p},
    \]
    which defines a  homogeneous polynomial of degree $p$.

  \item $\mb V_i = \bm x \in \reals^n$ for all $i \in [p-1]$, and $\mb
    V_p = \mb I \in \reals^{n \times n}$:
    \begin{align*}
      \mcb A \bm x^{\tensor p-1}
      & :=
      \mcb A(\bm x,\dotsc,\bm x, \mb I)
      \in \reals^n, \\
      \left(\mcb A \bm x^{\tensor p-1}\right)_i
      & \hphantom:= \sum_{i_1,i_2, \dotsc, i_{p-1} \in [n]}
      \mc A_{i_1, i_2, \dotsc, i_{p-1},i} \, x_{i_1} x_{i_2} \dotsb
      x_{i_{p-1}}
      .
    \end{align*}

\end{enumerate}

For any tensors $\mcb A$, $\mcb B \in \Tensor^p \reals^n$, the inner
product between them is defined as
\[
  \innerprod{\mcb A}{\mcb B} := \sum_{i_1,i_2, \dotsc,i_p \in [n]}
  \mc A_{i_1, i_2, \dotsc, i_p}  \mc B_{i_1, i_2, \dotsc, i_p}.
\]
Hence, $\mcb A \bm x^{\tensor p}$ (defined above) can also be interpreted as
the inner product between $\mcb A$ and $\bm x^{\tensor p}$.

Two tensor norms will be used in the paper.
For a tensor $\mcb A \in \Tensor^p \reals^n$, its \emph{Frobenius
norm} is $\norm{\mcb A}{F} := \sqrt{\innerprod{\mcb A}{\mcb A}}$, and
its \emph{operator norm} $\mcb A$, $\norm{\mcb A}{}$, is defined as
$\max_{\norm{\bm x_i}{} = 1} \mcb A(\bm x_1, \bm x_2, \dotsc, \bm
x_p)$.
It is well-known that for symmetric tensors $\mcb A$,
$\norm{\mcb A}{} = \max_{\norm{\bm x}{}=1} |\mcb A
\bm x^{\tensor p}|$ (see, e.g.,~\cite{chen2012maximum,
zhang2012best}).

\paragraph{A motivating example.}
To illustrate why we are particularly interested in nearly SOD
tensors, we now consider the following simple probabilistic model for
characterizing the topics of text documents.
(We follow the description from~\cite{JMLR:v15:anandkumar14b}.)
Let $n$ be the number of distinct topics in the corpus, $d$ be the
number of distinct words in the vocabulary, and $t \ge p$ be the
number of words in each document.
We identify the sets of distinct topics and words, respectively, by
$[n]$ and $[d]$.
The topic model posits the following generative process for a
document.
The document's topic $h \in [n]$ is first randomly drawn according to
a discrete probability distribution specified by $\bm w = (w_1, w_2,
\dotsc, w_n)$ (where we assume $w_i >0$ for each $i\in[n]$ and
$\sum_{i\in[n]} w_i = 1$):
\[
  \prob{h = i} = w_i \quad \text{for all $i \in [n]$}.
\]
Given the topic $h$, each of the document's $t$ words is then drawn
independently from the vocabulary according to the discrete
distribution specified by the probability vector $\bm \mu_h \in
\reals^d$; we assume that the probability vectors $\{\bm
\mu_i\}_{i\in[n]}$ are linearly independent (and, in particular, $d
\geq n$).
The task here is to estimate these probability vectors $\bm w$ and
$\{\bm \mu_i\}_{i\in[n]}$ based on a corpus of documents.

Denote by $\mb M_2 \in \reals^{d \times d}$ and $\mcb M_p \in \Tensor^p \reals^d$, respectively,  the pairs probability matrix and
$p$-tuples probability tensor, defined as follows: for all $i_1, i_2, \dotsc, i_p \in [d]$, ,
\begin{flalign*}
(\mb M_2)_{i_1,i_2} &= \prob{\text{1st word} = i_1,\; \text{2nd word}=i_2} \\
(\mcb M_p)_{i_1,i_2,\dotsc ,i_p} &= \prob{\text{1st word} = i_1,\; \text{2nd word}=i_2, \cdots,
\; \text{$p$th word}=i_p}.
\end{flalign*}
It can be shown that $\mb M_2$ and $\mcb M_p$ can be precisely
represented using $\bm w$ and $\{\bm \mu_i\}_{i\in[n]}$:
\begin{flalign*}
  \mb M_2 = \sum_{i\in[n]} w_i \bm \mu_i  \bm \mu_i^\t \quad \text{and} \quad
  \mcb M_p = \sum_{i\in[n]} w_i \bm \mu_i^{\tensor p}.
\end{flalign*}

Since $\mb M_2$ is positive semidefinite and $\rank(\mb M_2) = n$, $\mb
M_2 = \mb U \mb D \mb U^\t$ is its reduced eigenvalue
decomposition.
Here, $\bm U \in \reals^{d \times n}$ satisfies $\mb
U^\t \mb U = \mb I$, and $\mb D \in \reals^{n \times n}$ is a diagonal
matrix with $\diag{\mb D} \succ \bm 0$.
Now define
\[
  \mb W := \mb U \mb D^{-1/2} , \quad
  \lambda_i := w_i^{1-p/2}, \quad \text{and} \quad
  \bm v_i := \sqrt{w_i} \mb W^\t \bm \mu_i \in \reals^n \quad
  \text{for each $i \in [n]$} .
\]
Then
\[
  \mb W^\t \mb M_2 \mb W
  = \mb I
  = \mb M_2(\mb W, \mb W)
  = \sum_{i\in[n]} w_i (\mb W^\t \bm \mu_i)(\mb W^\t \bm \mu_i)^\t
  = \sum_{i\in[n]} \bm v_i \bm v_i^\t,
\]
which implies that $\{\bm v_1, \bm v_2, \dotsc, \bm v_n\}$ are orthogonal.
Moreover,
\begin{equation}
  \mcb T
  := \mcb M_p(\mb W, \mb W, \dotsc,\mb W)
  = \sum_{i\in[n]} w_i \, (\mb W^\t \bm \mu_i)^{\tensor p}
  = \sum_{i\in[n]} \lambda_i \bm v_i^{\tensor p}.
  \label{eqn:topic-decomp}
\end{equation}
Therefore, we can obtain $\{(\lambda_i, \bm v_i)\}_{i \in [n]}$ (and subsequently
$\{ (w_i,\bm \mu_i) \}_{i\in[n]}$ \footnote{%
  After obtaining $\{(\lambda_i, \bm v_i)\}_{i\in[n]}$, it is possible
  to obtain $\set{(w_i, \bm \mu_i)}_{i \in [n]}$ because for each $i
  \in [n]$, there exists $j \in [n]$ such that $w_i = \lambda_j^{2/(2-p)}$
  and $\bm \mu_i = \lambda_j (\mb W^\t)^\dag \bm v_j$, where $(\mb
  W^\t)^\dag$ denotes the Moore-Penrose pseudoinverse of $\bm W^\t$.%
}) by computing the (unique) symmetric canonical decomposition of tensor
$\mcb T$, which can be perfectly achieved by SROA
\cite{zhang2001rank}.

In order to obtain the tensor $\mcb T$, we need
$\mb M_2$ and $\mcb M_p$, both of which can be estimated from a
collection of documents.
Due to their independence, all pairs (resp., $p$-tuples) of words in a
document can be used in forming estimates of $\mb M_2$ (resp., $\mcb
M_p$).
However, the quantities $\mb M_2$ and $\mcb M_p$ are only known up
to sampling errors, and hence,  we are only able to construct a symmetric tensor
$\wh{\mcb T}$ that is, at best, only close to the one
in~\eqref{eqn:topic-decomp}.
A critical question is whether we can still use SROA (Algorithm
\ref{alg:rank-1}) to obtain an approximate decomposition and robustly
estimate the model parameters.

\begin{algorithm}
\caption{Successive Rank-One Approximation (SROA)}
\label{alg:rank-1}
\begin{algorithmic}[1]
  \renewcommand\algorithmicrequire{\textbf{input}}

  \REQUIRE symmetric tensor $\wh{\mcb T} \in \bigotimes^p \reals^n$.

  \STATE \textbf{initialize} $\wh{\mcb T}_0 := \wh{\mcb T}$
  \FOR{$i=1$ to $n$}
    \STATE $(\hat\lambda_i,\hat{\bm v}_i) \in
    \argmin_{\lambda\in\reals,\norm{\bm v}{}=1} \; \norm{\wh{\mcb
    T}_{i-1} - \lambda \bm v^{\tensor p}}{F}$.
    \label{line:rank_one_approx}

    \STATE $\wh{\mcb T}_i := \wh{\mcb T}_{i-1} - \hat{\lambda}_i
    \hat{\bm v}_i^{\tensor p}$.

  \ENDFOR

  \RETURN $\{ (\hat\lambda_i, \hat{\bm v}_i) \}_{i \in [n]}$.
\end{algorithmic}
\end{algorithm}

\paragraph{Setting}
Following the notation in the above example, in
the sequel, we denote $\wh{\mcb T} = \mcb T + \mcb E \in \Tensor^p
\reals^n$. Here $\mcb T$ is a symmetric tensor that is orthogonally
decomposable, i.e., $\mcb T = \sum_{i=1}^n \lambda_i \bm v_i^{\tensor p}$
with all $\lambda_i\neq 0$, $\set{\bm v_1, \bm v_2, \dotsc, \bm v_n}$ forming an orthonormal
basis of $\reals^n$, and $\mcb E$ is a symmetric perturbation tensor
with operator norm $\eps:=\norm{\mcb E}{}$. Note that in some
applications, we might instead have $\mcb T = \sum_{i=1}^r \lambda_i
\bm v_i^{\tensor p}$ for some $r < n$. Our results nevertheless can be
applied in that setting as well with little modification.

For simplicity, we also assume $p$ is odd and treat it as a constant
in big-$O$ notations. (We discuss the even case in
Section~\ref{sec:even}).
Without loss of generality, we can assume $\lambda_i > 0$ for all $i
\in [n]$, as we can always change the sign of $\bm v_i$ to make it
hold.
Moreover, line \ref{line:rank_one_approx} in
Algorithm~\ref{alg:rank-1} simply becomes
\begin{flalign*}
  \hat{\bm v}_i \in \argmax_{\norm{\bm v}{}=1} \wh{\mcb T}_{i-1} \bm
  v^{\tensor p} , \quad
  \hat \lambda_i = \wh{\mcb T}_{i-1} \hat{\bm v}_i^{\tensor p} .
\end{flalign*}

\paragraph{Organization}
Section \ref{sec:first_iter} analyzes the first iteration of
Algorithm \ref{alg:rank-1} and proves that $(\hat{\lambda}_1, \hat{\bm
v}_1)$ is a robust estimate of a pair $(\lambda_i, \bm v_i)$ for some
$i\in [n]$.
A full decomposition analysis is provided in Section
\ref{sec:full_deflation}, in which we establish the following property
of tensors: when $\norm{\mcb E}{}$ is small enough, the approximation
errors do not accumulate as the iteration number grows; in contrast,
the use of deflation is generally not advised in the matrix setting
for finding more than a handful of matrix eigenvectors due to
potential instability. Numerical experiments are also reported
to confirm our theoretical results.
\section{Rank-One Approximation} \label{sec:first_iter}
In this section, we provide an analysis of the first step of SROA
(Algorithm~\ref{alg:rank-1}), which yield a rank-one
approximation to $\wh{\mcb T}$.

\subsection{Review of Matrix Perturbation Analysis}

We first state a well-known result about perturbations of the
eigenvalue decomposition for symmetric matrices; this result serves as
a point of comparison for our study of higher-order tensors.
The result is stated just for rank-one approximations, and in a form
analogous to what we are able to show for the tensor case
(Theorem~\ref{thm:one_step} below).
\begin{theorem}[\cite{weyl1912asymptotische,davis1970rotation}]\label{thm:matrix_pert}
  Let $\mb M \in \reals^{n \times n}$ be a symmetric matrix with
  eigenvalue decomposition $\sum_{i=1}^n \lambda_i \bm v_i \bm
  v_i^\t$, where $|\lambda_1| \ge |\lambda_2| \ge \dotsb \ge
  |\lambda_n|$ and $\{ \bm v_1, \bm v_2, \dotsc, \bm v_n \}$ are
  orthonormal.
  Let $\wh{\mb M} = \mb M + \mb E \in \reals^{n \times n}$ for some
  symmetric matrix $\mb E$ with $\eps := \norm{\mb E}{}$, and let
  \[
    (\hat\lambda,\hat{\mb x}) \in
    \argmin_{\lambda\in\reals,\norm{\bm x}{}=1}
    \norm{\wh{\mb M}-\lambda \bm x \bm x^\t}{F} .
  \]
  The following holds.
  \begin{itemize}
    \item (Perturbation of leading eigenvalue.)
      $|\hat{\lambda} - \lambda_1| \le \eps$.

    \item (Perturbation of leading eigenvector.)
      Suppose $\gamma := \min_{i\neq1}|\lambda_1-\lambda_i| > 0$.
      Then $\innerprod{\hat{\bm x}}{\bm v_1}^2 \geq 1 -
      (2\eps/\gamma)^2$.
      This implies that if $2\eps/\gamma \leq 1$, then
      $\min\{\norm{\bm v_1 - \hat{\bm x}}{},\, \norm{\bm v_1 +
      \hat{\bm x}}{}\} \leq O(\eps/\gamma)$.

  \end{itemize}

\end{theorem}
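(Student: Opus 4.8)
The plan is to reduce the constrained Frobenius-norm minimization to an eigenvalue problem for $\wh{\mb M}$, and then to invoke classical matrix perturbation estimates: Weyl's inequality for the eigenvalue and a Davis--Kahan-style argument for the eigenvector. For the reduction I would fix a unit vector $\bm x$ and minimize over $\lambda$ alone: since $\norm{\bm x\bm x^\t}{F}^2 = 1$ and $\innerprod{\wh{\mb M}}{\bm x\bm x^\t} = \bm x^\t\wh{\mb M}\bm x$, completing the square gives
\begin{equation*}
  \norm{\wh{\mb M} - \lambda\bm x\bm x^\t}{F}^2
  = \norm{\wh{\mb M}}{F}^2 - (\bm x^\t\wh{\mb M}\bm x)^2 + (\lambda - \bm x^\t\wh{\mb M}\bm x)^2 .
\end{equation*}
Hence the optimal $\lambda$ equals $\bm x^\t\wh{\mb M}\bm x$, and the problem collapses to maximizing $|\bm x^\t\wh{\mb M}\bm x|$ over the unit sphere. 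By the Rayleigh variational principle this maximum is the operator norm $\norm{\wh{\mb M}}{}$, attained at a unit eigenvector of $\wh{\mb M}$ whose eigenvalue has largest absolute value. Thus $\hat{\bm x}$ is this leading eigenvector and $\hat\lambda = \hat{\bm x}^\t\wh{\mb M}\hat{\bm x}$ the corresponding eigenvalue, so both claims reduce to perturbation bounds for the top eigenpair.

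For the eigenvalue bound, observe that $|\hat\lambda| = \norm{\wh{\mb M}}{}$ and $|\lambda_1| = \norm{\mb M}{}$, so the triangle inequality for the operator norm yields $\big| |\hat\lambda| - |\lambda_1| \big| \le \norm{\mb E}{} = \eps$. Evaluating the Rayleigh quotient of $\wh{\mb M}$ at $\bm v_1$ (after replacing $\mb M, \mb E$ by $-\mb M, -\mb E$ if necessary to make $\lambda_1 > 0$) confirms that the two leading eigenvalues share a sign in the relevant small-$\eps$ regime, upgrading this to the signed estimate $|\hat\lambda - \lambda_1| \le \eps$.

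For the eigenvector bound I would start from the eigenrelation $\wh{\mb M}\hat{\bm x} = \hat\lambda\hat{\bm x}$, which rearranges to $(\mb M - \hat\lambda\mb I)\hat{\bm x} = -\mb E\hat{\bm x}$, so $\norm{(\mb M - \hat\lambda\mb I)\hat{\bm x}}{} \le \eps$. Expanding $\hat{\bm x} = \sum_i c_i\bm v_i$ in the orthonormal eigenbasis of $\mb M$ turns the left side into $\sum_i (\lambda_i - \hat\lambda)^2 c_i^2 \le \eps^2$. For $i \neq 1$, the eigenvalue bound gives $|\lambda_i - \hat\lambda| \ge |\lambda_i - \lambda_1| - |\lambda_1 - \hat\lambda| \ge \gamma - \eps$, whence $(\gamma-\eps)^2\sum_{i\neq1} c_i^2 \le \eps^2$. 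Since $\sum_{i\neq1} c_i^2 = 1 - \innerprod{\hat{\bm x}}{\bm v_1}^2$, this yields $1 - \innerprod{\hat{\bm x}}{\bm v_1}^2 \le \eps^2/(\gamma-\eps)^2$; under $2\eps/\gamma \le 1$ we have $\gamma - \eps \ge \gamma/2$, so the right side is at most $(2\eps/\gamma)^2$ (the claimed inequality being vacuous otherwise). Finally, writing $\norm{\bm v_1 \mp \hat{\bm x}}{}^2 = 2(1 \mp \innerprod{\hat{\bm x}}{\bm v_1})$ and choosing the sign matching $\innerprod{\hat{\bm x}}{\bm v_1}$, the estimate $|\innerprod{\hat{\bm x}}{\bm v_1}| \ge \sqrt{1 - (2\eps/\gamma)^2}$ together with $1 - \sqrt{1-u} \le u$ gives $\min\{\norm{\bm v_1 - \hat{\bm x}}{}, \norm{\bm v_1 + \hat{\bm x}}{}\} \le O(\eps/\gamma)$.

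The main obstacle I anticipate is the eigenvector step: it is the only place that genuinely needs the spectral-gap hypothesis, and care is required to convert the gap $\gamma$ between the unperturbed eigenvalues into a usable separation between $\hat\lambda$ and the remaining $\lambda_i$—this is exactly where the eigenvalue bound feeds back in and where the factor-of-two slack in $(2\eps/\gamma)^2$ originates. A secondary subtlety is the sign bookkeeping in the eigenvalue claim, since the rank-one fit only pins down $|\hat\lambda|$ through the operator norm, so one must rule out near-ties in magnitude among the eigenvalues of $\mb M$.
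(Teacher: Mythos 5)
Your proposal is correct and takes essentially the same route as the paper's Appendix~A proof: reduce the best symmetric rank-one fit to the top-magnitude eigenpair of $\wh{\mb M}$, control the eigenvalue by a Weyl-type bound, and bound the eigenvector by the Davis--Kahan residual argument, with only cosmetic differences (you center the residual at $\hat\lambda$ rather than at $\lambda_1$ as the paper does, recovering the stated constant via $\gamma - \eps \ge \gamma/2$ when $2\eps/\gamma \le 1$, and you derive the eigenvalue estimate from the operator-norm triangle inequality rather than citing Weyl directly). The sign subtlety you flag in the eigenvalue step is genuine—the rank-one fit pins down only $|\hat\lambda|$, and near-ties in magnitude between eigenvalues of opposite sign are a real obstruction—but the paper's one-line appeal to Weyl's inequality glosses over exactly the same point, so your attempt matches rather than falls short of the paper's own level of rigor there.
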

For completeness, we give a proof of the eigenvector perturbation
bound in Appendix~\ref{sec:matrix_pert} since it is not directly
implied by results in~\cite{davis1970rotation} but essentially uses
the same arguments.

\subsection{Single Rank-One Approximation}

The main result of this section concerns the first step of SROA
(Algorithm \ref{alg:rank-1}) and establishes a perturbation result for
nearly SOD tensors.
\begin{theorem}\label{thm:one_step}
  For any odd positive integer $p \geq 3$,
  let $\wh{\mcb T} := \mcb T + \mcb E \in \Tensor^p \reals^n$, where
  $\mcb T$ is a symmetric tensor with orthogonal decomposition $\mcb T
  = \sum_{i=1}^n \lambda_i \bm v_i^{\tensor p}$, $\set{\bm v_1, \bm
  v_2, \dotsc, \bm v_n}$ is an orthonormal basis of $\reals^n$,
  $\lambda_i > 0$ for all $i \in [n]$, and $\mcb E$ is a symmetric
  tensor with operator norm $\eps:=\norm{\mcb E}{}$.
  Let $\hat{\bm x} \in \arg \max_{\norm{\bm x}{2}=1} \wh{\mcb T}\bm
  x^{\otimes p}$ and $\hat{\lambda} := \wh{\mcb T}\hat{\bm x}^{\otimes
  p}$.
  Then there exists $j \in [n]$ such that
  \begin{flalign}\label{eqn:one_step}
    |\hat \lambda - \lambda_j| \le \eps,
    \quad
    \norm{\hat{\bm x}-\bm v_j}{2} \le 10\left(\frac{\eps}{\lambda_j} +
    \left(\frac{\eps}{\lambda_j}\right)^2 \right).
  \end{flalign}
\end{theorem}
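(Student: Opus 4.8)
The plan is to work entirely in the orthonormal eigenbasis. Writing $\hat{\bm x} = \sum_{i=1}^n c_i \bm v_i$ with $\sum_i c_i^2 = 1$, note that $\mcb T \hat{\bm x}^{\otimes p} = \sum_i \lambda_i c_i^p$ and that the whole theorem reduces to showing that exactly one coordinate $c_j$ is close to $1$: indeed $\norm{\hat{\bm x} - \bm v_j}{}^2 = 2(1 - c_j)$, so it suffices to lower bound the dominant coordinate. I would record two facts at the outset. First, the definition of the operator norm gives $\Abs{\mcb E \hat{\bm x}^{\otimes p}} \le \eps$ and $\norm{\mcb E \hat{\bm x}^{\otimes p-1}}{} \le \eps$. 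Second, since $\hat{\bm x}$ maximizes $\wh{\mcb T} \bm x^{\otimes p}$ over the sphere, the first-order optimality (Lagrange) condition reads $\wh{\mcb T} \hat{\bm x}^{\otimes p-1} = \hat\lambda\, \hat{\bm x}$; projecting onto each $\bm v_i$ yields the coordinate equations $c_i(\hat\lambda - \lambda_i c_i^{p-2}) = g_i$, where $g_i := \innerprod{\bm v_i}{\mcb E \hat{\bm x}^{\otimes p-1}}$ satisfies $\sum_i g_i^2 \le \eps^2$. These equations are the engine of the proof.

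Next I would establish coarse bounds and identify $j$. Testing the maximizer against $\bm v_k$ for $k \in \argmax_i \lambda_i$ gives $\hat\lambda \ge \lambda_k + \mcb E \bm v_k^{\otimes p} \ge \lambda_{\max} - \eps$. On the other hand, letting $j := \argmax_i \Abs{c_i}$ be the dominant coordinate, the bound $\sum_i \lambda_i c_i^p \le \Abs{c_j}^{p-2} \sum_i \lambda_i c_i^2 \le \Abs{c_j}^{p-2} \lambda_{\max}$ together with $\hat\lambda \le \mcb T \hat{\bm x}^{\otimes p} + \eps$ forces $\Abs{c_j}^{p-2} \ge 1 - 2\eps/\lambda_{\max}$. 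Thus $c_j$ is close to $1$ once $\eps/\lambda_{\max}$ is small (and one may assume this, since otherwise the claimed bound already exceeds the trivial $\norm{\hat{\bm x} - \bm v_j}{} \le 2$). Feeding $c_j \approx 1$ back into the $j$-th coordinate equation shows $\lambda_j \ge \lambda_{\max} - O(\eps)$ and $c_j > 0$, so the recovered eigenvalue is essentially the largest and the bound in terms of $\lambda_j$ is meaningful.

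The heart of the argument is the eigenvector bound, obtained by controlling the leakage $\bm s := \hat{\bm x} - c_j \bm v_j$, for which $\norm{\bm s}{}^2 = 1 - c_j^2$. For $i \ne j$ the coordinate equation gives $c_i = g_i/(\hat\lambda - \lambda_i c_i^{p-2})$, and I would lower bound each denominator uniformly: since $\Abs{c_i} \le \norm{\bm s}{}$ for $i \ne j$, we have $\lambda_i c_i^{p-2} \le \lambda_{\max} \norm{\bm s}{}^{p-2}$, whence $\hat\lambda - \lambda_i c_i^{p-2} \ge (\lambda_{\max} - \eps) - \lambda_{\max}\norm{\bm s}{}^{p-2}$. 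Squaring and summing over $i \ne j$ then yields the self-consistent inequality
\[
  \norm{\bm s}{}\,\Parens{\lambda_{\max}\Parens{1 - \norm{\bm s}{}^{p-2}} - \eps} \le \sqrt{\textstyle\sum_{i \ne j} g_i^2} \le \eps .
\]
The coarse bound from the previous step guarantees $\norm{\bm s}{}$ is small enough that $1 - \norm{\bm s}{}^{p-2}$ is bounded away from $0$, so solving this inequality (and replacing $\lambda_{\max}$ by $\lambda_j$, which costs only the correction captured by the quadratic term) gives $\norm{\bm s}{} \le \eps/\lambda_j + O((\eps/\lambda_j)^2)$. Finally $\norm{\hat{\bm x} - \bm v_j}{}^2 = 2(1 - c_j) \le 2(1 - c_j^2) = 2\norm{\bm s}{}^2$, and tracking the constants produces the stated bound $10(\eps/\lambda_j + (\eps/\lambda_j)^2)$.

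For the eigenvalue bound I would prove the two inequalities separately. The lower bound $\hat\lambda \ge \lambda_j - \eps$ is immediate by testing against $\bm v_j$. For the upper bound $\hat\lambda \le \lambda_j + \eps$ it suffices to show $\mcb T \hat{\bm x}^{\otimes p} \le \lambda_j$, i.e. $\sum_i c_i^2(\lambda_j - \lambda_i c_i^{p-2}) \ge 0$: the $i = j$ term equals $c_j^2 \lambda_j(1 - c_j^{p-2}) \ge 0$, and each $i \ne j$ term is nonnegative once the leakage bound $\Abs{c_i} \lesssim \eps/\lambda_j$ and $\lambda_i \le \lambda_{\max} \le \lambda_j + O(\eps)$ make $\lambda_i c_i^{p-2} \le \lambda_j$. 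I expect the main obstacle to be the bootstrapping in the eigenvector step: the denominator lower bound presupposes that $\norm{\bm s}{}$ is small, which is exactly what the inequality is meant to prove, so one must carefully seed the argument with the coarse bound and verify that solving the self-consistent inequality lands at the clean first-order rate with the claimed constants. A secondary subtlety is that the single index $j$ must serve both conclusions; this is why I take $j$ to be the dominant coordinate (not merely $\argmax_i \lambda_i$) and separately verify $\lambda_j \approx \lambda_{\max}$, and why the clean constant $\eps$ in the eigenvalue bound can only be extracted after the leakage has been controlled.
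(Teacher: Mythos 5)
Your skeleton matches the paper's at the structural level: sandwich $\hat\lambda$ between the test-vector lower bound $\lambda_{\max}-\eps$ and a weighted-power upper bound to get coarse alignment, then invoke the first-order optimality condition $\wh{\mcb T}\hat{\bm x}^{\otimes p-1} = \hat\lambda\hat{\bm x}$ to upgrade the rate from $\sqrt{\eps/\lambda}$ to $\eps/\lambda$. Where you genuinely diverge is in the two places you yourself flagged---the choice of $j$ and the refinement mechanics---and the first of these hides a real gap. Theorem~\ref{thm:one_step} carries \emph{no smallness assumption on $\eps$}, and the paper's proof is unconditional; yours is not. Your eigenvalue upper bound $\hat\lambda \le \lambda_j + \eps$ for the dominant-coordinate $j$ needs each term $c_i^2(\lambda_j - \lambda_i c_i^{p-2})$, $i \ne j$, to be nonnegative, which you obtain only from the \emph{refined} leakage bound $|c_i| \lesssim \eps/\lambda_j$, i.e.\ only after the self-consistent inequality has been solved; solving it in turn requires the coarse seed $\norm{\bm s}{} \le 2\sqrt{\eps/\lambda_{\max}}$ to lie below the larger root of $s\,(\lambda_{\max}(1-s^{p-2})-\eps) = \eps$. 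For $p=3$ this quadratic-in-$s$ analysis stalls once $\eps/\lambda_{\max}$ exceeds roughly $0.12$, while your trivial fallback (bounding $\norm{\hat{\bm x}-\bm v_j}{}$ by $2$, and taking $j \in \argmax_i \lambda_i$ so that $|\hat\lambda - \lambda_{\max}| \le \eps$) only activates once $\eps/\lambda_j \gtrsim 0.17$, where $10(t+t^2) \ge 2$. In the intermediate window neither branch produces a \emph{single} $j$ satisfying both conclusions: your fallback remark rescues only the vector bound, and the eigenvalue bound is not trivial there.

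The paper sidesteps this entirely with a third index choice, different from both of yours: $j \in \argmax_i \lambda_i |c_i|^{p-2}$. With that $j$, the chain $\hat\lambda \le \sum_i \lambda_i |c_i|^{p-2} c_i^2 + \eps \le \lambda_j |c_j|^{p-2} + \eps \le \lambda_j + \eps$ is immediate and unconditional, and combining with $\hat\lambda \ge \lambda_{\max} - \eps \ge \lambda_j - \eps$ yields both $|\hat\lambda - \lambda_j| \le \eps$ and the coarse alignment $|c_j| \ge |c_j|^{p-2} \ge 1 - 2\eps/\lambda_j$ in two lines, with no bootstrapping and no case split. (For small $\eps$ the paper's $j$ coincides with your dominant coordinate---a coordinate exceeding $1/\sqrt2$ is unique---which is why your argument does close in that regime.) The refinement step also differs: rather than solving the coordinate equations $c_i(\hat\lambda - \lambda_i c_i^{p-2}) = g_i$ and iterating a self-consistent inequality, the paper bounds $\norm{\lambda_j(\hat{\bm x}-\bm v_j)}{2}$ in a single pass by the triangle inequality applied to $\hat\lambda\hat{\bm x} = \lambda_j c_j^{p-1}\bm v_j + \sum_{i\ne j}\lambda_i c_i^{p-1}\bm v_i + \mcb E\hat{\bm x}^{\otimes p-1}$, the four pieces contributing $\eps + 4\eps + 4\eps(1+2\eps/\lambda_j) + \eps$, which is exactly where the constant $10$ comes from. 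If you swap your choice of $j$ for the paper's, your coordinatewise argument closes unconditionally as well; as written, though, the moderate-$\eps$ window is a genuine hole relative to the theorem as stated.
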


To prove Theorem~\ref{thm:one_step}, we first establish an
intermediate result.
Let $x_i := \innerprod{\bm v_i}{\hat{\bm x}}$, so $\hat{\bm x} =
\sum_{i=1}^n x_i \bm v_i$ and $\sum_{i=1}^n x_i^2 = 1$ since $\{\bm v_1, \bm
v_2, \dotsc, \bm v_n\}$ is an orthonormal basis for $\reals^n$ and
$\norm{\hat{\bm x}}{}=1$.
We reorder the indicies $[n]$ so that
\begin{flalign}
  \lambda_1 |x_1|^{p-2} \ge \lambda_2 |x_2|^{p-2} \ge \dotsb \ge
  \lambda_n |x_n|^{p-2} .
\end{flalign}
Our intermediate result, derived by simply bounding $\hat \lambda$
from both above and below, is as follows.
\begin{lemma}\label{lemma:first_step}
  In the notation from above,
  \begin{flalign}\label{eqn:lemma}
    \lambda_1 \ge \lambda_{\max}-2 \eps,
    \quad |x_1| \ge 1 - 2\eps/\lambda_1,
    \quad x_1^2 \ge x_1^{p-1} \ge 1 -
    4\eps/\lambda_1,
    \quad \text{and} \quad |\hat{\lambda} - \lambda_1| \le \eps
    .
  \end{flalign}
where $\lambda_{\max} = \max_{i\in[n]} \lambda_i$.
\end{lemma}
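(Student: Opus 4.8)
The plan is to determine $\hat\lambda = \wh{\mcb T}\hat{\bm x}^{\tensor p}$ up to an additive $O(\eps)$ by squeezing it between a lower bound coming from the optimality of $\hat{\bm x}$ and an upper bound coming from expanding $\wh{\mcb T} = \mcb T + \mcb E$; all four assertions in \eqref{eqn:lemma} will then follow from elementary estimates. Throughout I use that a unit vector makes the perturbation contribution small: $\Abs{\mcb E \bm y^{\tensor p}} \le \norm{\mcb E}{} = \eps$ whenever $\norm{\bm y}{} = 1$.

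For the lower bound, I would choose $k \in [n]$ with $\lambda_k = \max_i \lambda_i = \lambda_{\max}$ and test the maximizer against $\bm v_k$: since $\hat{\bm x}$ maximizes $\wh{\mcb T}\bm x^{\tensor p}$ over the unit sphere, orthonormality of the $\bm v_i$ gives $\hat\lambda \ge \wh{\mcb T}\bm v_k^{\tensor p} = \lambda_{\max} + \mcb E\bm v_k^{\tensor p} \ge \lambda_{\max} - \eps$. For the upper bound, I write $\hat\lambda = \mcb T\hat{\bm x}^{\tensor p} + \mcb E\hat{\bm x}^{\tensor p} = \sum_i \lambda_i x_i^p + \mcb E \hat{\bm x}^{\tensor p}$ and bound the last term by $\eps$. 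The heart of the argument is to collapse the signal term to its dominant coordinate: since $p$ is odd and $\lambda_i > 0$,
\[
  \sum_i \lambda_i x_i^p \;\le\; \sum_i \lambda_i |x_i|^p
  \;=\; \sum_i \bigl(\lambda_i |x_i|^{p-2}\bigr)\, x_i^2
  \;\le\; \Bigl(\max_i \lambda_i|x_i|^{p-2}\Bigr)\sum_i x_i^2
  \;=\; \lambda_1 |x_1|^{p-2},
\]
using $\sum_i x_i^2 = 1$ and the chosen reordering $\lambda_1|x_1|^{p-2} = \max_i \lambda_i|x_i|^{p-2}$. Hence $\hat\lambda \le \lambda_1|x_1|^{p-2} + \eps$.

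Combining the two bounds gives $\lambda_1 |x_1|^{p-2} \ge \lambda_{\max} - 2\eps$, and since $|x_1| \le 1$ forces $\lambda_1|x_1|^{p-2} \le \lambda_1$, this yields $\lambda_1 \ge \lambda_{\max} - 2\eps$. Dividing $\lambda_1|x_1|^{p-2} \ge \lambda_1 - 2\eps$ (here using $\lambda_{\max} \ge \lambda_1$) by $\lambda_1$ gives $|x_1|^{p-2} \ge 1 - 2\eps/\lambda_1$, and since for $t \in [0,1]$ a smaller exponent gives a larger value, so that $|x_1| \ge |x_1|^{p-2}$, I get $|x_1| \ge 1 - 2\eps/\lambda_1$. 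The third chain then follows from $|x_1|^{p-1} = |x_1|\cdot|x_1|^{p-2} \ge (1-2\eps/\lambda_1)^2 \ge 1 - 4\eps/\lambda_1$ together with $x_1^2 = |x_1|^2 \ge |x_1|^{p-1} = x_1^{p-1}$ (as $p-1 \ge 2$). Finally $|\hat\lambda - \lambda_1| \le \eps$ drops out of the same two one-sided bounds, since $\hat\lambda \le \lambda_1|x_1|^{p-2} + \eps \le \lambda_1 + \eps$ and $\hat\lambda \ge \lambda_{\max} - \eps \ge \lambda_1 - \eps$.

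I expect the only real subtlety to be the collapse step above: one must reindex by $\lambda_i|x_i|^{p-2}$ rather than by $\lambda_i$ so that the weighted sum telescopes against $\sum_i x_i^2 = 1$, and---because $p$ is odd---one must pass from $x_i^p$ to $|x_i|^p$ (valid as $\lambda_i > 0$) before applying the bound. The remaining work is bookkeeping with the monotonicity of powers on $[0,1]$ and with $(1-a)^2 \ge 1 - 2a$; a minor point is that when $\eps > \lambda_1/2$ the lower bounds on $|x_1|$ and $x_1^{p-1}$ become non-positive and hold trivially.
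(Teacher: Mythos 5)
Your proposal is correct and follows essentially the same route as the paper: squeeze $\hat\lambda$ between the lower bound $\lambda_{\max}-\eps$ (testing against the best $\bm v_i$) and the upper bound $\lambda_1|x_1|^{p-2}+\eps$ (writing $\sum_i \lambda_i x_i^p \le \max_i \lambda_i|x_i|^{p-2}\sum_i x_i^2$), then extract all four claims by elementary manipulations. Your explicit handling of the trivial regime $\eps > \lambda_1/2$ is a small point the paper leaves implicit, but otherwise the two arguments coincide step for step.
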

\begin{proof}
To show \eqref{eqn:lemma}, we will bound $\hat{\lambda} = \wh{\mcb T}\hat{\bm x}^{\otimes p}$ from both above and below.

For the upper bound, we have
\begin{flalign}
  \hat \lambda = \wh{\mcb T}\hat{\bm x}^{\otimes p}
  & = \mcb T \hat{\bm x}^{\otimes p}+ \mcb E \hat{\bm x}^{\otimes p}
  \notag \\
  & = \sum_{i=1}^n \lambda_i x_i^p + \mcb E \hat{\bm x}^{\otimes p}
  \notag \\
  & \le \sum_{i=1}^n \lambda_i |x_i|^{p-2} x_i^2 + \eps \le \lambda_1
  |x_1|^{p-2} + \eps,
\label{eqn:upper_bd}
\end{flalign}
where the last inequality follows since $\sum_{i=1}^n x_i^2 = 1$.

On the other hand,
\begin{flalign}\label{eqn:lower_bd}
  \hat \lambda \ge \max_{i\in[n]} \mcb T {\bm v_i}^{\tensor p} - \norm{\mcb E}{} =  \lambda_{\max} - \eps \ge \lambda_1 - \eps.
\end{flalign}

Combining \eqref{eqn:upper_bd} and \eqref{eqn:lower_bd}, it can be easily verified that
\[
\lambda_1 \ge \lambda_{\max} - 2\eps, \quad |\lambda_1 - \hat{\lambda}| \le \eps.
\]
and moreover,
\[
|x_1| \ge |x_1|^{p-2} \ge \frac{\lambda_1 - 2 \eps}{\lambda_1} = 1 - \frac{2\eps}{\lambda_1}
\]
which implies that $x_1^{p-1} = |x_1|^{p-2} \cdot |x_1| \ge 1 - 4\eps/\lambda_1.$
\end{proof}
\begin{remark}
The higher-order requirement, $p\ge 3$, is crucial in the analysis. Specifically we can bound $|x_1|$ below by the lower bound of $|x_1|^{p-2}$, which can be done by bounding $\hat{\lambda} = \wh{\mcb T}\hat{\bm x}^{\otimes p}$ from both above and below. This essentially explains why Lemma \ref{lemma:first_step}, different from the matrix case ($p=2$), does not rely on the spectral gap condition.
\end{remark}

The bound $|\hat \lambda - \lambda_1| \le \eps$ proved in Lemma \ref{lemma:first_step} is comparable to the matrix counterpart in Theorem \ref{thm:matrix_pert}, and is optimal in the worst case. Consider ${\mcb T} = \sum_{i=1}^n \lambda_i \bm e_i^{\tensor p}$ with $\lambda_1 > \lambda_2 \ge \ldots \ge \lambda_n \ge 0$ and $\mcb E = \eps \bm e_1^{\tensor p}$, for some $\eps > 0$. Then clearly $\hat \lambda = \lambda_1+\eps$ and $|\hat \lambda - \lambda_1| = \eps$.

Moreover, when $\mcb E$ vanishes, Lemma \ref{lemma:first_step} leads
directly to the following result given in \cite{zhang2001rank}.
\begin{corollary}[\cite{zhang2001rank}]
  Suppose $\mcb E = \mb 0$ (i.e., $\wh{\mcb T} = \mcb T = \sum_{i=1}^n
  \lambda_i \bm v_i^{\tensor p}$ is orthogonally decomposable).
  Then Algorithm \ref{alg:rank-1} computes $\{(\lambda_i, \bm
  v_i)\}_{i=1}^n$ exactly.
\end{corollary}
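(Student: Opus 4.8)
The plan is to show that with $\eps = 0$, Lemma~\ref{lemma:first_step} already pins down the first iterate \emph{exactly}, and then to close the argument by a straightforward induction on the number of remaining rank-one terms.

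First I would specialize Lemma~\ref{lemma:first_step} to the noiseless case. Setting $\eps = 0$ in \eqref{eqn:lemma} yields $\lambda_1 \ge \lambda_{\max}$, $|x_1| \ge 1$, and $|\hat\lambda - \lambda_1| \le 0$. Since $\sum_{i=1}^n x_i^2 = 1$, the bound $|x_1| \ge 1$ forces $|x_1| = 1$ and hence $x_i = 0$ for all $i \neq 1$ (in the reordered indexing), so that $\hat{\bm x} = x_1 \bm v_1 = \pm \bm v_1$; in particular $\lambda_1 = \lambda_{\max}$. Because $p$ is odd and every $\lambda_i > 0$, we have $\hat\lambda = \mcb T \hat{\bm x}^{\tensor p} = \lambda_1 x_1^p$, and since $\hat\lambda = \lambda_1 > 0$ this forces $x_1^p = 1$, hence $x_1 = 1$ and $\hat{\bm x} = \bm v_1$. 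Thus the first iteration returns $(\hat\lambda_1, \hat{\bm v}_1) = (\lambda_1, \bm v_1)$ exactly, where $\lambda_1 = \lambda_{\max}$ is a largest eigenvalue and $\bm v_1$ its eigenvector.

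Next I would run the induction. The residual after the first step is $\wh{\mcb T}_1 = \mcb T - \lambda_1 \bm v_1^{\tensor p} = \sum_{i \neq 1} \lambda_i \bm v_i^{\tensor p}$, which is again a symmetric orthogonally decomposable tensor, now with $n-1$ positive-weight terms whose directions form an orthonormal set (no longer a full basis of $\reals^n$). Since for any unit $\bm x$ the component of $\bm x$ orthogonal to $\mathrm{span}\{\bm v_i : i \neq 1\}$ contributes nothing to $\wh{\mcb T}_1 \bm x^{\tensor p}$, the very same argument applies verbatim to $\wh{\mcb T}_1$ (this is exactly the $r < n$ situation anticipated in the Setting), so the second iteration recovers another pair $(\lambda_i, \bm v_i)$ exactly. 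Iterating, Algorithm~\ref{alg:rank-1} peels off the pairs one at a time in nonincreasing order of $\lambda_i$ and recovers $\{(\lambda_i, \bm v_i)\}_{i=1}^n$ exactly after $n$ steps.

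The argument is almost immediate once Lemma~\ref{lemma:first_step} is in hand, so the only point needing care is the reduced-rank residual: I must confirm that the rank-one maximizer selects a single eigenvector (rather than a mixture) even when the top eigenvalue is repeated. This is guaranteed by $|x_1| = 1$, which holds regardless of ties, since any genuine mixture $\sum_i x_i \bm v_i$ with at least two nonzero coordinates would have $|x_1| < 1$. Hence ties produce at most an ambiguity in the order of recovery, not in the recovered set of pairs.
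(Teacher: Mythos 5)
Your proposal is correct and takes essentially the same route as the paper, which derives this corollary directly from Lemma~\ref{lemma:first_step} with $\eps = 0$: the lemma forces $\hat\lambda = \lambda_1 = \lambda_{\max}$ and $|x_1| = 1$, and the induction on the exactly-deflated residual (the $r < n$ situation the paper's Setting paragraph explicitly covers) finishes the argument. Your additional details --- using odd $p$ and $\hat\lambda = \lambda_1 > 0$ to pin down $x_1 = 1$, and noting that ties only permute the recovery order --- are just the spelled-out version of the paper's one-line claim and are all sound.
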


However, compared to Theorem \ref{thm:matrix_pert}, the bound for $\mb
x$ is $|x_1| \ge 1- 2\eps/\lambda_1$ appears to be suboptimal; this is
because the bound only implies $\norm{\hat{\bm x}- \bm v_1}{} =
O(\sqrt{\eps/\lambda_1})$.
In the following, we will proceed to improve this result to
$\norm{\hat{\bm x}- \bm v_1}{} = O({\eps/\lambda_1})$ by using the
first-order optimality condition~\cite{wright1999numerical}.
See~\cite{lim2005eigenvalue} for a discussion in the present context.

Consider the Lagrangian function corresponding to the optimization
problem $\max_{\norm{\bm x}{}=1} \wh{\mcb T} \bm x^{\tensor p},$
\begin{flalign}
\mcb L(\bm x, \lambda):= \wh{\mcb T} \bm x^{\tensor p}-\frac{\lambda}{2}\left(\norm{\bm x}{}^2-1 \right),
\end{flalign}
where $\lambda \in \reals$ corresponds to the Lagrange multiplier for
the equality constraint.
As $\hat{\bm x} \in \arg \max_{\norm{\bm x}{2}=1} \wh{\mcb T}\bm
x^{\otimes p}$ (and the linear independent constraint qualification \cite{wright1999numerical}
can be easily verified), there exists $\bar \lambda \in \reals$ such
that
\begin{flalign}\label{eqn:1st_order_opt}
\nabla \mcb L(\hat{\bm x}, \bar \lambda) = \wh{\mcb T} \bm x^{\tensor p-1} - \bar \lambda \bm x= 0.
\end{flalign}
Moreover, as $\norm{\hat{\bm x}}{}=1$, $\bar \lambda = \bar \lambda
\innerprod{\bm x}{\bm x} = \wh{\mcb T} \bm x^{\tensor p}=\hat
\lambda$.
Thus we have $\hat \lambda \hat{\bm x} = \wh{\mcb T} \hat{\bm
x}^{\tensor p-1}$.

We are now ready to prove Theorem~\ref{thm:one_step}.
\begin{proof}[Proof of Theorem~\ref{thm:one_step}]
The first inequality in \eqref{eqn:one_step} has been proved in Lemma \ref{lemma:first_step}, so we are left to prove the second one.

From the first-order optimality condition above, we have
\[
\hat \lambda \hat{\bm x} = \wh{\mcb T} \hat{\bm x}^{\tensor p-1} = \mcb T \hat{\bm x}^{\tensor p-1} + \mcb E \hat{\bm x}^{\tensor p-1} = \lambda_1 x_1^{p-1}\bm v_1 + \sum_{i \ge 2} \lambda_i x_i^{p-1} \bm v_i + \mcb E \hat{\bm x}^{\tensor p-1}.
\]
Thus,
\begin{flalign}
\norm{\lambda_1 (\hat{\bm x} - \bm v_1)}{2} &= \norm{(\lambda_1 - \hat \lambda) \hat{\bm x} + (\hat \lambda \hat{\bm x} - \lambda_1 \bm v_1)}{2} \nonumber \\
& = \norm{(\lambda_1 - \hat \lambda) \hat{\bm x} + \lambda_1 (x_1^{p-1}-1)\bm v_1 + \sum_{i \ge 2} \lambda_i x_i^{p-1} \bm v_i + \mcb E \hat{\bm x}^{\tensor p-1}}{2} \nonumber \\
& \le |\lambda_1 - \hat \lambda| + \lambda_1 |x_1^{p-1}-1| + \norm{\sum_{i \ge 2} \lambda_i x_i^{p-1} \bm v_i }{2} + \norm{\mcb E \hat{\bm x}^{\tensor p-1}}{2} \label{eqn:thm_1_sum}
\end{flalign}
by the triangle inequality.
By Lemma \ref{lemma:first_step}, we have
\begin{flalign}\label{eqn:thm_1_bd1}
  |\lambda_1 - \hat{\lambda}| \le \eps, \quad |x_1^{p-1} -1| \le
  4\eps/\lambda_1, \quad  \text{and} \quad \norm{\mcb E \hat{\bm x}^{\tensor p-1}}{2} \le \eps.
\end{flalign}
Moreover,
\begin{flalign}\label{eqn:thm_1_bd2}
\norm{\sum_{i\ge 2} \lambda_i x_i^{p-1} \bm v_i}{2} = \left( \sum_{i\ge 2} \lambda_i^2 x_i^{2p-2} \right)^{1/2} &\le \lambda_2 |x_2|^{p-2} \sqrt{1-x_1^2} \nonumber\\
\le& \lambda_2(1-x_1^2) \le \frac{4\eps \lambda_2}{\lambda_1} \le 4\eps(1+2\eps/\lambda_1),
\end{flalign}
where we have used Lemma \ref{lemma:first_step} and the fact that $\lambda_2/\lambda_1 \le \lambda_{\max}/\lambda_1 \le 1+2\eps/\lambda_1.$
Substituting \eqref{eqn:thm_1_bd1} and \eqref{eqn:thm_1_bd2} back into \eqref{eqn:thm_1_sum}, we can easily obtain
\begin{flalign}\label{eqn:temp}
  \norm{\hat{\bm x}-\bm v_1}{2} \le 10\left(\frac{\eps}{\lambda_1} +
  \left(\frac{\eps}{\lambda_1}\right)^2 \right).
\end{flalign}
\end{proof}

\begin{remark}
When $p>3$, we can slightly sharpen \eqref{eqn:temp} to
\begin{flalign*}
  \norm{\hat{\bm x}-\bm v_1}{2} \le 8\frac{\eps}{\lambda_1} + 4\left(\frac{\eps}{\lambda_1}\right)^2,
\end{flalign*}
by replacing \eqref{eqn:thm_1_bd2} with
\[
\lambda_2(1-x_1^2) \le \lambda_2 (1-|x_1|^{p-2}) \le  \lambda_2 \cdot \frac{2\eps}{\lambda_1} \le 2 \eps(1+2\eps/\lambda_1).
\]

\end{remark}

Theorem~\ref{thm:one_step} indicates that the first step of SROA for a
nearly SOD tensor approximately recovers $(\lambda_j,\bm v_j)$ for
some $j \in [n]$.
In particular, whenever $\eps$ is small enough relative to $\lambda_1$
(e.g., $\eps \le \lambda_1/2$), there always exists $j\in [n]$ such
that $|\hat \lambda - \lambda_j| \le \eps$ and $\norm{\hat{\bm x} -\bm
v_j}{2} \le 10\cdot (1+1/2)\eps/\lambda_j = 15\eps/\lambda_j$.
This is analogous to Theorem \ref{thm:matrix_pert}, except that {\em the
spectral gap condition} required in Theorem \ref{thm:matrix_pert} is
{\em not necessary at all} for the perturbation bounds of SOD tensors.

\subsection{Numerical Verifications for Theorem \ref{thm:one_step}}
\begin{figure*}
  \begin{center}
    \setlength{\tabcolsep}{0.25in}
    \begin{tabular}{cc}
      \includegraphics[width=2in]{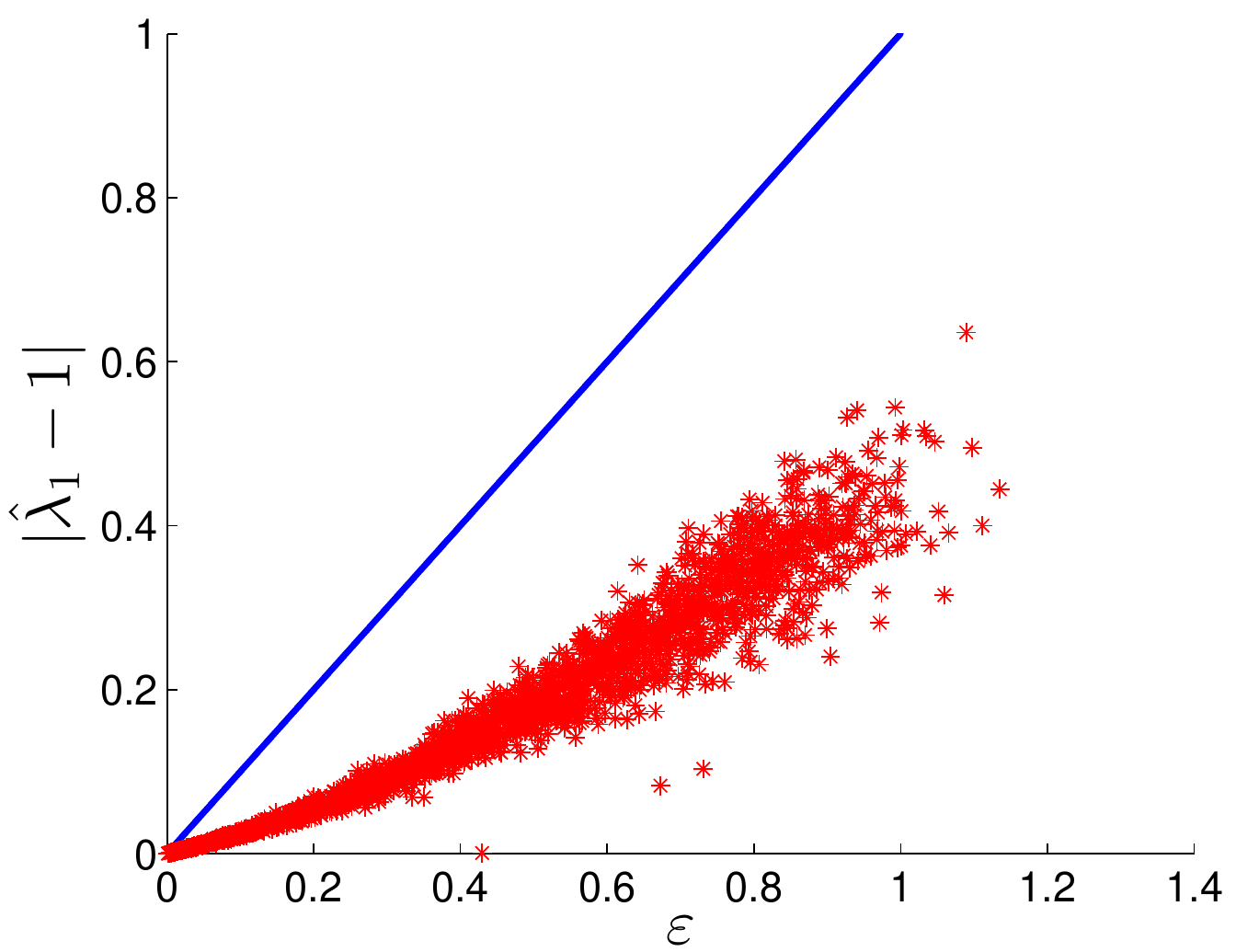} &
      \includegraphics[width=2in]{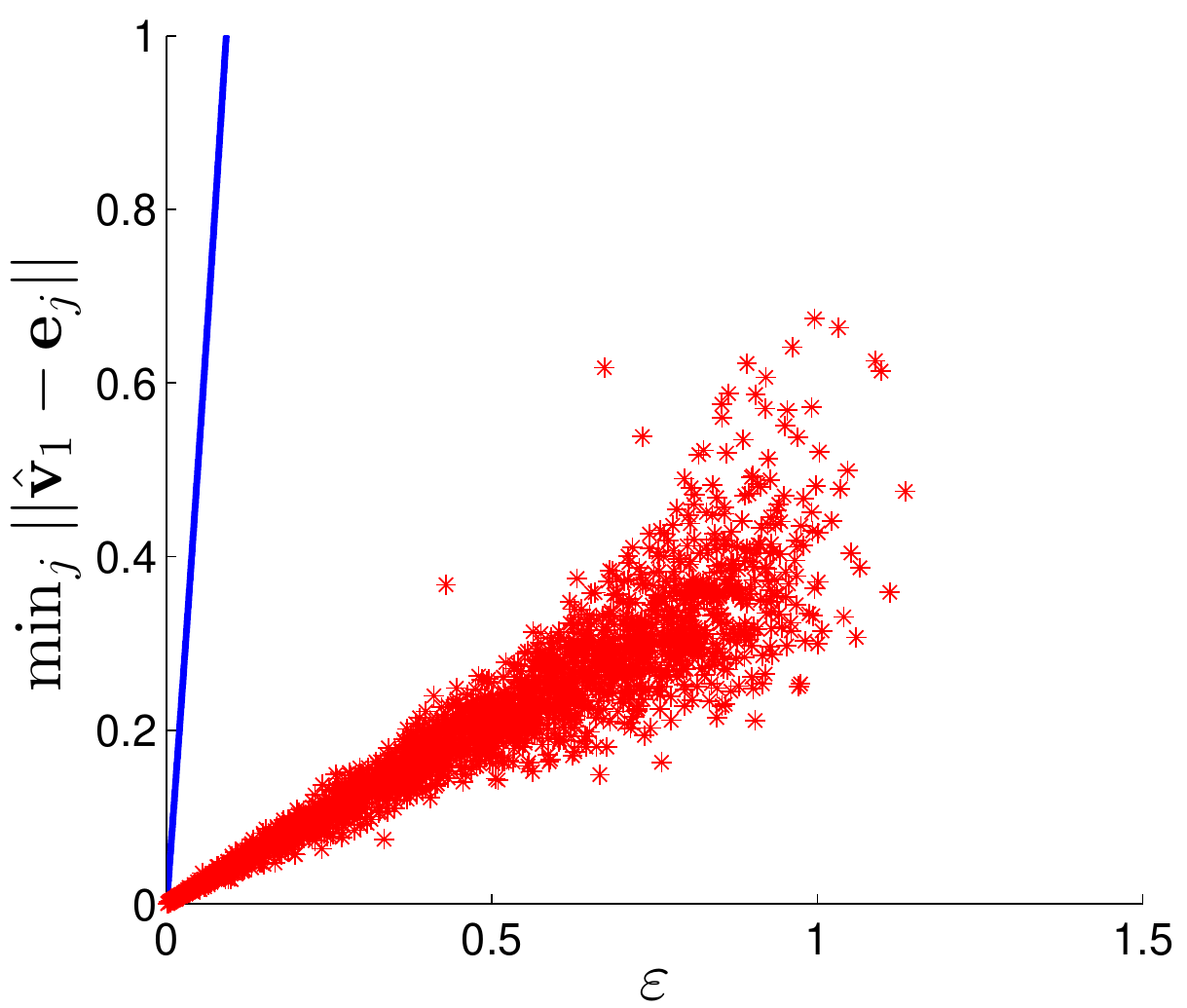}
      \\
      \multicolumn{2}{c}{\emph{Perturbation: binary}}
      \\
      \\
      \includegraphics[width=2in]{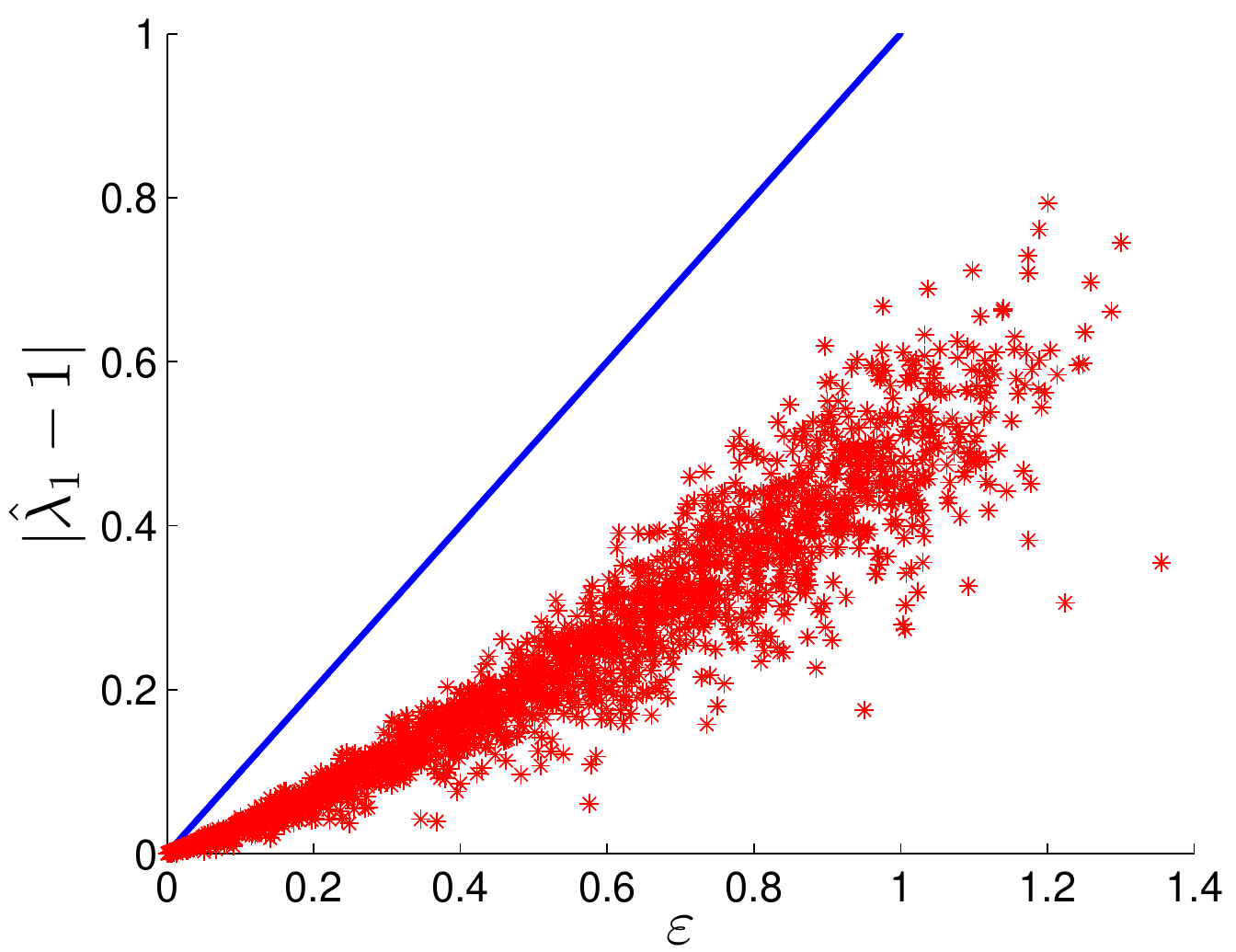} &
      \includegraphics[width=2in]{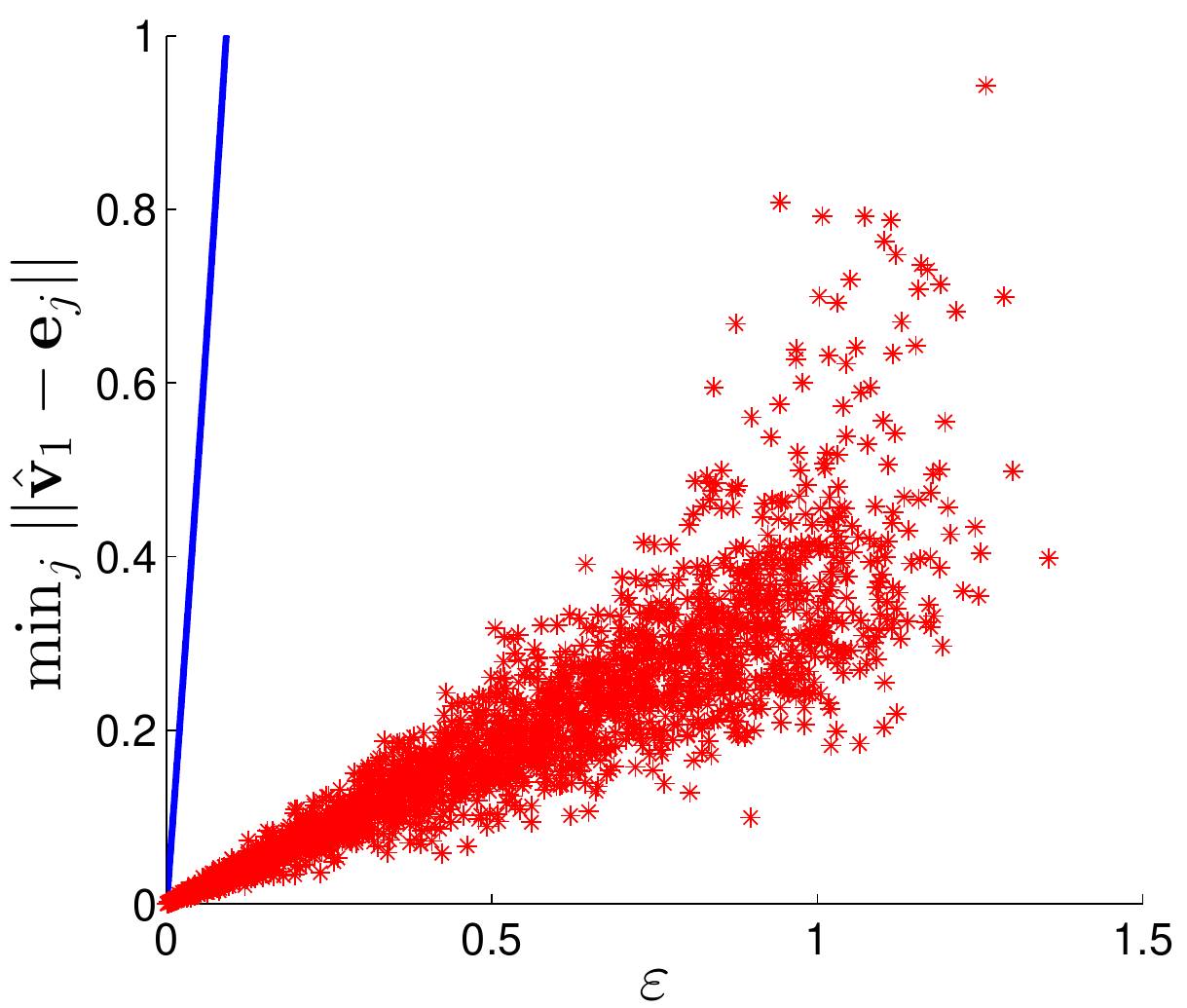}
      \\
      \multicolumn{2}{c}{\emph{Perturbation: uniform}}
      \\
      \\
      \includegraphics[width=2in]{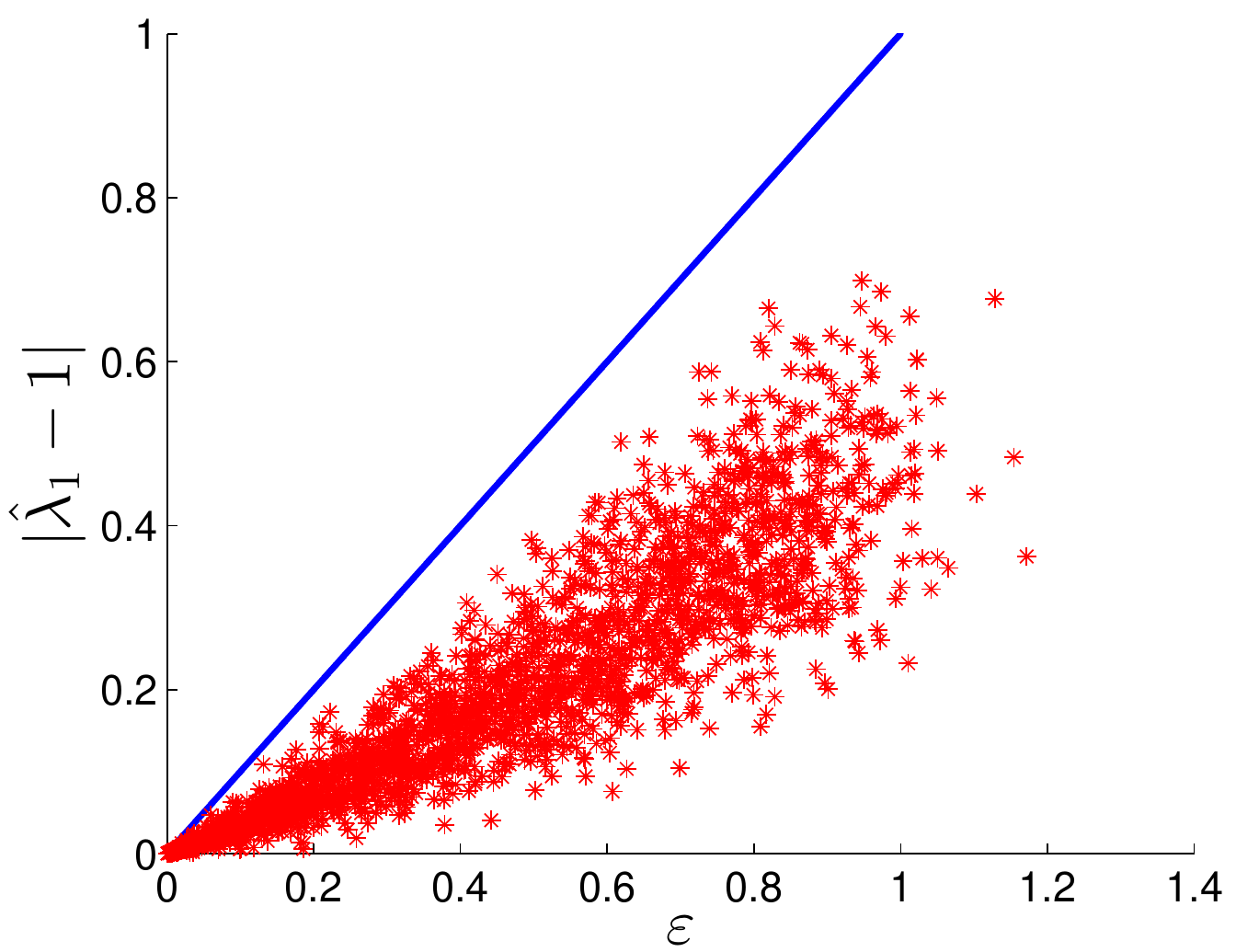} &
      \includegraphics[width=2in]{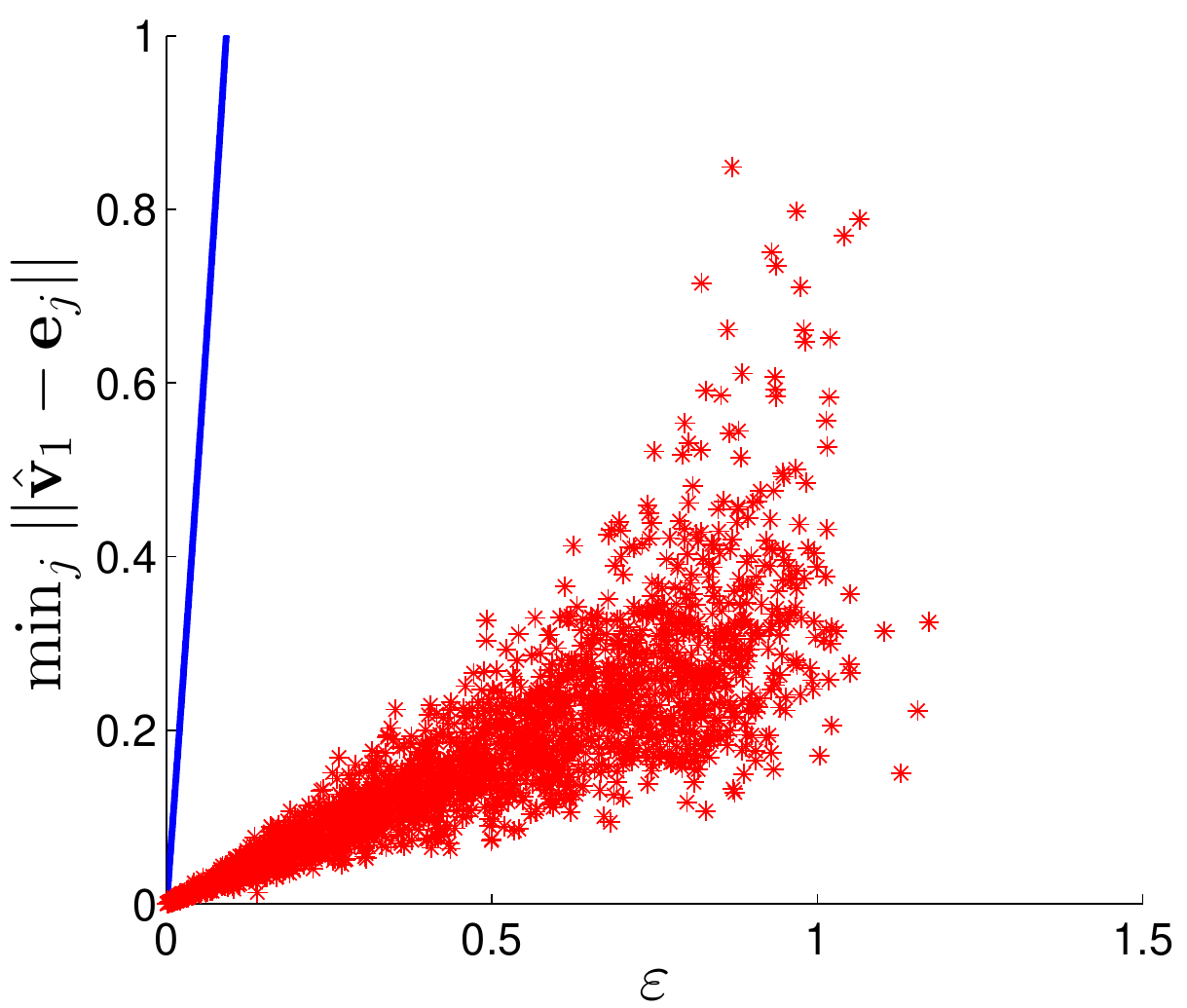}
      \\
      \multicolumn{2}{c}{\emph{Perturbation: Gaussian}}
      \\
      \\

    \end{tabular}
    \caption{%
      \textbf{Approximation Errors of the First Iteration.}
      The approximation errors in $\hat\lambda_1$ (resp., $\hat{\mb v}_1$)
      are plotted on the left (resp., right) as a function of the size
      of the perturbation $\eps$.
      Each (red) point corresponds to one randomly generated instance,
      and the (blue) solid line is the upper bound from
      Theorem~\ref{thm:one_step}.
    }
    \label{fig:first_iter}
  \end{center}
\end{figure*}

We generate nearly symmetric orthogonally decomposable tensors
$\wh{\mcb T} = \mcb T + \mcb E \in \reals^{10\times 10 \times 10}$ in
the following manner.
We let the underlying symmetric orthogonally decomposable tensor $\mcb
T$ be the diagonal tensor with all diagonal entries equal to one,
i.e., $\mcb T = \sum_{i=1}^{10} \bm e_i^{\tensor 3}$ (where $\bm e_i$
is the $i$-th coordinate basis vector).
The perturbation tensor $\mcb E$ is generated under the following three random
models:
\begin{description}
  \item[Binary:]
    independent entries $\mc E_{i,j,k} \in \{\pm\sigma\}$ uniformly at
    random;

  \item[Uniform:]
    indepedent entries $\mc E_{i,j,k} \in [-2\sigma,2\sigma]$
    uniformly at random;

  \item[Gaussian:]
    independent entries $\mc E_{i,j,k} \sim \mc N(0,\sigma^2)$;

\end{description}
where $\sigma$ is varied from $0.0001$ to
$0.2$ with increment $0.0001$, and one instance is generated for each value of $\sigma$.

For every randomly generated instance, we solve the polynomial
optimization problems
\begin{flalign}
\norm{\mcb E}{} &= \max_{\norm{\bm x}{} = 1} \mcb E \bm x^{\tensor 3}
  \quad \mbox{and}\quad \hat{\bm v}_1 \in \arg \max_{\norm{\bm x}{}=1}
  \wh{\mcb T} \bm x^{\tensor 3}
\end{flalign}
using the general polynomial solver
\textsf{GloptiPoly~3}~\citep{henrion2009gloptipoly}, and set $\hat
\lambda_1 := \wh{\mcb T} \hat{\bm v}_1^{\tensor 3}.$ 

In Figure \ref{fig:first_iter}, we plot the approximation error $|\hat
\lambda_1-1|$ and $\min_{j\in[10]} \norm{\hat{\bm v}_1- \bm e_j}{}$,
respectively against the value of norm $\norm{\mcb E}{}$.
Each (red) point corresponds to one randomly generated instance, and the
(blue) lines are the upper bounds given in Theorem \ref{thm:one_step}.
We observe no instance violating the theoretical bounds.

\section{Full Decomposition Analysis}\label{sec:full_deflation}
In the second iteration of Algorithm~\ref{alg:rank-1}, we have
\[
  \hat{\bm v}_2 \in \arg \max_{\norm{\bm x}{2}= 1} \wh{\mcb T}_1 \bm
  x^{\tensor p}, \quad \hat \lambda_2 = \wh{\mcb T}_1 \hat{\bm
  v}_2^{\tensor p}
  ,
\]
where, for some $j \in [n]$,
\[
  \wh{\mcb T}_1
  = \hat{\mcb T} - \hat \lambda_1 \hat{\bm v_1}^{\tensor p}
  = \sum_{i\neq j} \lambda_i \bm v_i^{\tensor p} + \wh{\mcb E}
  \quad\text{and}\quad
  \wh{\mcb E} = \mcb E + (\lambda_j\bm v_j^{\tensor p}- \hat \lambda_1
  \hat{\bm v_1}^{\tensor p})
  .
\]
Theorem~\ref{thm:one_step} can be directly applied again by bounding
the error norm $\|\wh{\mcb E}\|$.
However, since
\begin{flalign*}
  \|\wh{\mcb E}\|
  & = \norm{\mcb E + (\lambda_j\bm v_j^{\tensor p}- \hat \lambda_1 \hat{\bm v_1}^{\tensor p})}{} \nonumber \\
  & = \norm{\mcb E + (\lambda_j - \hat \lambda_1) \bm v_j^{\tensor p} + \hat \lambda_1(\bm v_j^{\tensor p} - \hat{\bm v}_1^{\tensor p}) }{} \nonumber \\
  & \le \norm{\mcb E}{} + |\lambda_j - \hat \lambda_1| + \hat \lambda_1 \norm{\bm v_j^{\tensor p} - \hat{\bm v}_1^{\tensor p}}{} \nonumber \\
  & \le (2 + 10 \sqrt p) \eps + O(\eps^2/ \lambda_j) ,
\end{flalign*}
it appears that the approximation error may increase dramatically with
the iteration number.

Fortunately, a more careful analysis shows that approximation error
does not in fact accmulate in this way.
The high-level reason is that while the operator norm $\|\lambda_j \bm v_j^{\tensor p} -
\hat{\lambda}_1 \hat{\bm v}_1^{\tensor p}\|$ is of order $\eps$,
the relevant quantity is essentially $\left(\lambda_j \bm v_j^{\tensor p} -
\hat{\lambda}_1 \hat{\bm v}_1^{\tensor p}\right)$ operating on the direction of
$\hat{\bm v}_2$, i.e.
$|(\lambda_j \bm v_j^{\tensor p} -
\hat{\lambda}_1 \hat{\bm v}_1^{\tensor p}) \hat{\bm v}_2^{\tensor p}|$,
which only gives rise to a quantity of order $\eps^2$ because $p \geq 3$.
This enables us to keep the approximation errors under control.

The main result of this section is as follows.
\begin{theorem}\label{thm:main}
  Pick any odd positive integer $p \geq 3$.
  There exists a positive constant $c_0 = c_0(p) >0$ such that the
  following holds.
  Let $\wh{\mcb T} := \mcb T + \mcb E \in \Tensor^p \reals^n$, where
  $\mcb T$ is a symmetric tensor with orthogonal decomposition $\mcb T
  = \sum_{i=1}^n \lambda_i \bm v_i^{\tensor p}$, $\set{\bm v_1, \bm
  v_2, \dotsc, \bm v_n}$ is an orthonormal basis of $\reals^n$,
  $\lambda_i > 0$ for all $i \in [n]$, and $\mcb E$ is a symmetric
  tensor with operator norm $\eps:=\norm{\mcb E}{}$.
  Assume $\eps \le c_0 \lambda_{\min}/n^{1/(p-1)}$, where
  $\lambda_{\min} := \min_{i\in[n]} \lambda_i$.
  Let $\{ (\hat\lambda_i,\hat{\bm v}_i) \}_{i \in [n]}$ be the output
  of Algorithm~\ref{alg:rank-1} for input $\wh{\mcb T}$.
  Then there exists a permutation $\pi$ on $[n]$ such that
  \[
    |\lambda_{\pi(j)} - \hat{\lambda}_j| \le 2\eps, \qquad  \norm{\bm
    v_{\pi(j)}-\hat{\bm v}_j}{} \le 20\eps/\lambda_{\pi(j)}  , \quad
    \forall j \in [n]
    .
  \]
\end{theorem}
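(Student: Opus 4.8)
The plan is to argue by induction on the iteration count $k$, proving the hypothesis $H(k)$: after $k$ steps of Algorithm~\ref{alg:rank-1} there is an injection $\pi$ from $[k]$ into $[n]$ with $|\lambda_{\pi(j)}-\hat\lambda_j|\le 2\eps$ and $\norm{\bm v_{\pi(j)}-\hat{\bm v}_j}{}\le 20\eps/\lambda_{\pi(j)}$ for every $j\le k$. The base case $k=0$ is vacuous (and $k=1$ already follows from Theorem~\ref{thm:one_step}, whose constant $15$ is below $20$). For the inductive step I write the current residual as $\wh{\mcb T}_k=\mcb T^{(k)}+\mcb E^{(k)}$, where $\mcb T^{(k)}:=\sum_{i\notin S_k}\lambda_i\bm v_i^{\tensor p}$ is itself an orthogonally decomposable tensor supported on the \emph{live} subspace $\mathrm{span}\set{\bm v_i:i\notin S_k}$ (with $S_k:=\set{\pi(1),\dots,\pi(k)}$), and $\mcb E^{(k)}:=\mcb E+\sum_{j\le k}\mcb D_j$ collects the deflation residuals $\mcb D_j:=\lambda_{\pi(j)}\bm v_{\pi(j)}^{\tensor p}-\hat\lambda_j\hat{\bm v}_j^{\tensor p}$. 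Step $k+1$ is then a single rank-one approximation of $\mcb T^{(k)}$ corrupted by $\mcb E^{(k)}$, so the goal is to re-run the reasoning behind Lemma~\ref{lemma:first_step} and Theorem~\ref{thm:one_step} with $\mcb E^{(k)}$ in place of $\mcb E$.

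The essential point—and the reason errors do not accumulate—is that I must never bound $\mcb E^{(k)}$ through its operator norm, which is only $O(k\eps)$. Inspecting the proofs of Lemma~\ref{lemma:first_step} and Theorem~\ref{thm:one_step}, the perturbation enters solely through three quantities: the scalar $\mcb E^{(k)}\bm v_i^{\tensor p}$ along a live $\bm v_i$ (lower bound), the scalar $\mcb E^{(k)}\hat{\bm v}_{k+1}^{\tensor p}$ (upper bound), and the vector $\mcb E^{(k)}\hat{\bm v}_{k+1}^{\tensor p-1}$ (first-order optimality). For each I would bound the action of every $\mcb D_j$ directly: since $\hat{\bm v}_j\approx\bm v_{\pi(j)}$ and $\hat\lambda_j\approx\lambda_{\pi(j)}$ by $H(k)$, the identity $a^p-b^p=(a-b)\sum_{i} a^i b^{p-1-i}$ together with the orthogonality $\innerprod{\bm v_{\pi(j)}}{\bm v_i}=0$ shows that $\mcb D_j$ acts on any unit $\bm x$ essentially through the components of $\bm x$ along the dead direction $\bm v_{\pi(j)}$, raised to a power $\ge p-1\ge2$. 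Along a live $\bm v_i$ this gives $|\mcb D_j\bm v_i^{\tensor p}|=O(\eps^p/\lambda_{\min}^{p-1})$, and summing over $j\le k\le n$ while invoking the hypothesis $\eps\le c_0\lambda_{\min}/n^{1/(p-1)}$ (which makes $n(\eps/\lambda_{\min})^{p-1}=O(1)$) keeps the total at $O(\eps)$ with a constant shrinking in $c_0$.

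For the two evaluations at $\hat{\bm v}_{k+1}$ the novelty is that I first control the \emph{dead-subspace mass} $Z:=\norm{P\hat{\bm v}_{k+1}}{}$, where $P$ projects onto $\mathrm{span}\set{\bm v_{\pi(j)}:j\le k}$. A first, crude energy estimate—comparing the lower bound $\hat\lambda_{k+1}\ge\max_{i\notin S_k}\wh{\mcb T}_k\bm v_i^{\tensor p}\ge\max_{i\notin S_k}\lambda_i-(1+o(1))\eps$ against the upper bound $\hat\lambda_{k+1}\le(\max_{i\notin S_k}\lambda_i)(1-Z^2)+(1+o(1))\eps$—yields $Z^2=O(\eps/\lambda_{\min})$. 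I then bootstrap using the first-order optimality condition $\hat\lambda_{k+1}\hat{\bm v}_{k+1}=\wh{\mcb T}_k\hat{\bm v}_{k+1}^{\tensor p-1}$: projecting onto the dead subspace annihilates $\mcb T^{(k)}$, so $\hat\lambda_{k+1}Z=\norm{P\mcb E^{(k)}\hat{\bm v}_{k+1}^{\tensor p-1}}{}\le\eps+(\text{deflation})$, and with the crude bound the deflation piece is $o(\hat\lambda_{k+1}Z)$, giving the sharp $Z=O(\eps/\lambda_{\min})$. With this sharp bound the two deflation contributions at $\hat{\bm v}_{k+1}$ become genuinely higher order, of sizes $O(\eps^p/\lambda_{\min}^{p-1})$ and $O(\eps^{p-1}/\lambda_{\min}^{p-2})$ respectively, hence $o(\eps)$. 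Thus the effective perturbation seen by step $k+1$ is $\eps(1+o(1))$; Lemma~\ref{lemma:first_step}/Theorem~\ref{thm:one_step} then identify a fresh live index $i^\star=\pi(k+1)\notin S_k$ with $|\lambda_{i^\star}-\hat\lambda_{k+1}|\le2\eps$ and $\norm{\bm v_{i^\star}-\hat{\bm v}_{k+1}}{}\le(15+o(1))\eps/\lambda_{i^\star}\le20\eps/\lambda_{i^\star}$, reproducing $H(k+1)$ with the \emph{same} constants; running to $k=n$ makes $\pi$ a permutation.

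The main obstacle is making the induction self-maintaining: the constant $20$ must regenerate itself rather than degrade, which is exactly why the deflation actions have to be extracted as $o(\eps)$ and not merely $O(\eps)$. Two ingredients are indispensable. First is the higher-order cancellation available only for $p\ge3$ (for $p=2$ the relevant powers drop to $1$ and the argument collapses, matching the known failure of matrix deflation). Second is the scaling $\eps\le c_0\lambda_{\min}/n^{1/(p-1)}$, needed to keep the $n$-fold sums of $p$-th-power deflation residuals bounded; the binding case is $p=3$, where an $\ell_2$-aggregation of up to $n$ residuals of size $O(\eps^{p-1}/\lambda_{\min}^{p-2})=O(\eps^2/\lambda_{\min})$ produces a factor $\sqrt n\,(\eps/\lambda_{\min})$ that the scaling forces down to $O(\eps\,c_0)$. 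Choosing $c_0=c_0(p)$ small enough absorbs all the $o(1)$ slack and closes the induction.
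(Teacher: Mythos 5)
Your proposal is correct and is, in substance, the paper's own argument: the same induction hypothesis with the same constants $2\eps$ and $20\eps/\lambda_{\pi(j)}$, the same decomposition $\wh{\mcb T}_k=\sum_{i\notin S_k}\lambda_i\bm v_i^{\tensor p}+\mcb E+\sum_{j\le k}\mb \Delta_j$, the same key insight that the deflation residuals must be tracked through their action on exactly the three evaluations you list (along live $\bm v_i$, and at $\hat{\bm v}_{k+1}$ scalarly and vectorially) rather than through the operator norm, the same $p\ge 3$ cancellation, and the same use of the scaling $\eps\le c_0\lambda_{\min}/n^{1/(p-1)}$ to tame the $n$-fold sums; your remark that $p=3$ is the binding case via the $\sqrt{n}\,(\eps/\lambda_{\min})$ factor from $\ell_2$-aggregation correctly matches the first term of \eqref{eqn:deflation_S2}, which is exactly where that scaling is tight. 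The one genuine difference is bookkeeping: the paper proves a standalone deflation lemma (Lemma~\ref{lem:main}) that, for an \emph{arbitrary} unit vector $\bm x$, splits the deflated indices into coherent ($\lambda_i|x_i|\ge 4\hat\eps$) and incoherent sets and bounds $\norm{\sum_i \mb \Delta_i \bm x^{\tensor p-1}}{2}$ per index, whereas you control the single dead-subspace mass $Z=\norm{P\hat{\bm v}_{k+1}}{2}$ of the specific maximizer by a crude energy estimate and then a bootstrap that projects the stationarity condition onto the dead subspace, annihilating $\mcb T^{(k)}\hat{\bm x}^{\tensor p-1}$ since it lies in the live span. That projection trick is a clean device absent from the paper, though strictly it is not needed to close the induction: the crude bound $Z^2=O(\eps/\lambda_{\min})$ already makes the coherent deflation contribution $O(\eps^2/\lambda_{\min})$, which is precisely how the paper proceeds (its analogue of your crude step is \eqref{eqn:weakbound}, fed unsharpened into \eqref{eqn:last_term}). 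Two small points to fix in a write-up: your per-term sizes such as $O(\eps^p/\lambda_{\min}^{p-1})$ suppress the factor of up to $n$ dead indices (you recover it later when invoking the scaling, but it should be stated at the point of summation); and your claim that the deflation piece in the bootstrap is $o(\hat\lambda_{k+1}Z)$ holds only for its coherent part---the incoherent part is of size $O(c_0^{p-2}\eps)$, which suffices to conclude $Z=O(\eps/\lambda_{\min})$ but is not literally $o(\hat\lambda_{k+1}Z)$.
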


\subsection{Deflation Analysis}
The proof of Theorem~\ref{thm:main} is based on the following lemma,
which provides a careful analysis of the errors introduced in $\mcb
T_i$ from steps $1,2,\dotsc,i$ in Algorithm~\ref{alg:rank-1}.
This lemma is a generalization of a result
from~\cite{JMLR:v15:anandkumar14b} (which only dealt with the $p=3$
case) and also more transparently reveals the sources of errors that result from
deflation.

\begin{lemma}\label{lem:main}
  Fix a subset $S \subseteq [n]$ and assume that $0 \leq \hat\eps \leq
  \lambda_i/2$ for each $i\in S$.
  Choose any $\{(\hat\lambda_i,\hat{\bm v}_i)\}_{i \in S} \subset \reals
  \times \reals^n$ such that
  \[
    |\lambda_i - \hat \lambda_i| \le \hat \eps , \quad
    \|\hat{\bm v}_i\| = 1 , \quad \text{and} \quad
    \innerprod{\bm v_i}{\hat{\bm v}_i}\ge 1- 2(\hat \eps/\lambda_i)^2 > 0
    ,
  \]
  and define $\mb \Delta_i := \lambda_i \bm v_i^{\otimes p} -
  \hat{\lambda}_i \hat{\bm v}_i^{\otimes p}$ for $i \in S$.
  Pick any unit vector $\bm x = \sum_{i=1}^n x_i \bm v_i$.
  Let $S_1 \subseteq S$ be the indices $i\in[n]$ such that $\lambda_i
  |x_i| \ge 4 \hat \eps $, and let $S_2 := S \setminus S_1$.
  Then
  \begin{flalign}
    \norm{\sum_{i \in S_1} \mb \Delta_i \bm x^{\otimes p-1}}{2}
    & \le 2^{p+1}p \bigg(\sum_{i\in S_1} x_i^{2(p-2)}\bigg)^{1/2}\hat
    \eps + 2^{p+1} \sum_{i\in S_1} |x_i|^{p-1} \hat \eps ,
    \label{eqn:deflation_S1}
    \\
    \norm{\sum_{i \in S_2} \mb \Delta_i \bm x^{\otimes p-1}}{2}
    & \le 6^p \left( \sum_{i\in S_2} \left(\frac{\hat
    \eps}{\lambda_i}\right)^{2(p-2)} \right)^{1/2}  \hat \eps  + 6^p
    \sum_{i \in S_2} \left(\frac{\hat \eps}{\lambda_i}\right)^{p-1}
    \hat \eps
    .
    \label{eqn:deflation_S2}
  \end{flalign}
  These imply that there exists positive constants $c_1, c_2 > 0$,
  depending only on $p$, such that
  \begin{flalign}
    \norm{\sum_{i \in S_1} \mb \Delta_i \bm x^{\otimes p-1}}{2}
    & \le c_1 \cdot \left(\sum_{i \in S_1} |x_i|^{p-1} \hat \eps\right) ,
    \\
    \norm{\sum_{i \in S_2} \mb \Delta_i \bm x^{\otimes p-1}}{2}
    & \le c_2 \cdot \left(\sum_{i \in S_2} \left(\frac{\hat
    \eps}{\lambda_i}\right)^{p-1} \hat \eps\right) ,
    \label{eqn:lem_S2}
    \\
    \norm{\sum_{i \in S_{\hphantom1}} \mb \Delta_i \bm x^{\otimes p-1}}{2}
    & \le c_1 \cdot \left(\sum_{i \in S} |x_i|^{p-1} \hat \eps\right) +
    c_2 \cdot \left(|S| \left(\frac{\hat \eps}{\min_{i\in S}\lambda_i}\right)^{p-1}
    \hat \eps\right)
    \label{eqn:lem_S_all}
    .
  \end{flalign}
\end{lemma}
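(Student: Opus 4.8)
The plan is to expand each $\mb\Delta_i\bm x^{\otimes p-1}$ explicitly, split it into three error contributions corresponding to the three ways $(\hat\lambda_i,\hat{\bm v}_i)$ differs from $(\lambda_i,\bm v_i)$, aggregate over $i$ (using the orthonormality of $\{\bm v_i\}$ for two of the contributions and the triangle inequality for the third), and let the dichotomy $S=S_1\cup S_2$ turn the same estimates into the two qualitatively different right-hand sides. Using multilinearity and $\bm v^{\otimes p}\bm x^{\otimes p-1}=\innerprod{\bm v}{\bm x}^{p-1}\bm v$, I would set $x_i:=\innerprod{\bm v_i}{\bm x}$ and $\hat x_i:=\innerprod{\hat{\bm v}_i}{\bm x}$ and write
\[
  \mb\Delta_i\bm x^{\otimes p-1}
  = \underbrace{(\lambda_i-\hat\lambda_i)\,x_i^{p-1}\bm v_i}_{A_i}
  + \underbrace{\hat\lambda_i\,(x_i^{p-1}-\hat x_i^{p-1})\,\bm v_i}_{B_i}
  + \underbrace{\hat\lambda_i\,\hat x_i^{p-1}\,(\bm v_i-\hat{\bm v}_i)}_{C_i},
\]
isolating the eigenvalue error, the coefficient error, and the eigenvector-direction error. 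The hypotheses immediately give the working bounds $\lambda_i/2\le\hat\lambda_i\le 3\lambda_i/2$ and, from $\innerprod{\bm v_i}{\hat{\bm v}_i}\ge 1-2(\hat\eps/\lambda_i)^2>0$ together with $\norm{\bm v_i}{}=\norm{\hat{\bm v}_i}{}=1$, the direction bound $\norm{\bm v_i-\hat{\bm v}_i}{}\le 2\hat\eps/\lambda_i$, whence $|x_i-\hat x_i|\le\norm{\bm v_i-\hat{\bm v}_i}{}\le 2\hat\eps/\lambda_i$.

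Since $A_i$ and $B_i$ point along the orthonormal $\bm v_i$, the sum over any index set has $\ell_2$ norm equal to the $\ell_2$ norm of its coefficients; since the directions $\bm v_i-\hat{\bm v}_i$ in $C_i$ are not orthogonal, I would bound $\sum C_i$ by the triangle inequality, producing an $\ell_1$ sum. The one nonobvious elementary estimate is the coefficient error: factoring $x_i^{p-1}-\hat x_i^{p-1}=(x_i-\hat x_i)\sum_{k=0}^{p-2}x_i^{k}\hat x_i^{p-2-k}$ avoids a spurious loss of a factor $p-1$ and keeps the constants at the stated $2^{p+1}$ and $6^p$ scale.

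The dichotomy then carries the real content. On $S_1$, where $\lambda_i|x_i|\ge 4\hat\eps$ forces $2\hat\eps/\lambda_i\le|x_i|/2$ and hence $|\hat x_i|\le\tfrac32|x_i|$, the coefficient error is of order $\hat\eps\,|x_i|^{p-2}$ while $A_i$ and $C_i$ are of order $\hat\eps\,|x_i|^{p-1}$; assembling the $\ell_2$ and $\ell_1$ sums gives \eqref{eqn:deflation_S1}. On $S_2$, where $|x_i|<4\hat\eps/\lambda_i$ and therefore also $|\hat x_i|<6\hat\eps/\lambda_i$, every factor $x_i^{p-1}$ and $\hat x_i^{p-1}$ is a $(p-1)$-th power of a quantity that is itself $O(\hat\eps/\lambda_i)$, so each contribution is controlled by powers of $\hat\eps/\lambda_i$ and, crucially, enters at order $(\hat\eps/\lambda_i)^{p-2}\hat\eps$ (the coefficient error) or $(\hat\eps/\lambda_i)^{p-1}\hat\eps$ rather than at order $\hat\eps$; this is precisely the higher-order ($\hat\eps^2$, since $p\ge 3$) effect flagged before the lemma, and it yields \eqref{eqn:deflation_S2}. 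The reduced single-term forms \eqref{eqn:lem_S2}, its $S_1$ analogue, and the combined bound \eqref{eqn:lem_S_all} then follow by elementary simplification—bounding $\ell_2$ sums by $\ell_1$ sums, using $|x_i|\le 1$ and $\hat\eps/\lambda_i\le 1/2$ to absorb the sub-leading powers produced by the two terms, and $|S_2|\le|S|$—with $p$ treated as a constant so that all $p$-dependent factors collapse into $c_1,c_2$.

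I expect the main obstacle to be the $S_2$ estimate, and specifically making the higher-order phenomenon rigorous: one must confirm that the near-vanishing coordinates $x_i$ push not just one but every one of $A_i,B_i,C_i$ to order $\hat\eps^2$, and simultaneously keep the constants under control through the telescoping so that they really do collapse to the stated $6^p$. The uniform bounds $|x_i|,|\hat x_i|\le 6\hat\eps/\lambda_i$ on $S_2$, together with the clean factorization of $x_i^{p-1}-\hat x_i^{p-1}$, are what I would rely on to keep this bookkeeping manageable; the $S_1$ case is then a routine re-run of the same estimates with $|x_i|$ in place of $\hat\eps/\lambda_i$.
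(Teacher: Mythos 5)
Your derivation of the two main bounds \eqref{eqn:deflation_S1} and \eqref{eqn:deflation_S2} is correct and is essentially the paper's argument in a different algebraic packaging: the paper writes $\hat{\bm v}_i = c_i \bm v_i + \sqrt{1-c_i^2}\,\bm w_i$ and splits each $\mb\Delta_i \bm x^{\otimes p-1}$ into a $\bm v_i$-component (aggregated in $\ell_2$ by orthonormality) and a $\bm w_i$-component (aggregated by the triangle inequality), controlling coefficients via $(1+z)^{p-1} \le 1+(2^{p-1}-1)z$; your telescoping $A_i+B_i+C_i$ yields the same two-block structure with the same working bounds ($\hat\lambda_i \le \tfrac{3}{2}\lambda_i$, $\norm{\bm v_i - \hat{\bm v}_i}{2} \le 2\hat\eps/\lambda_i$, $|\hat x_i| \le \tfrac{3}{2}|x_i|$ on $S_1$, $|\hat x_i| \le 6\hat\eps/\lambda_i$ on $S_2$), so this part goes through. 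One caveat: on $S_2$ your factorization of $x_i^{p-1}-\hat x_i^{p-1}$ produces a factor $p-1$, giving a constant of order $3(p-1)6^{p-2}$, which exceeds the stated $6^p$ once $p > 13$; the paper avoids this by not differencing on $S_2$ at all, bounding $\lambda_i |x_i|^{p-1}$ and $\hat\lambda_i(|x_i|+\sqrt{1-c_i^2})^{p-1}$ separately by the triangle inequality, and you should do the same there (it in fact simplifies your bookkeeping).

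The genuine gap is your final step. The single-term forms cannot be obtained ``by bounding $\ell_2$ sums by $\ell_1$ sums, using $|x_i|\le 1$ and $\hat\eps/\lambda_i \le 1/2$ to absorb the sub-leading powers'': the bound $\ell_2 \le \ell_1$ yields $\sum_{i\in S_1}|x_i|^{p-2}\hat\eps$ and $\sum_{i\in S_2}(\hat\eps/\lambda_i)^{p-2}\hat\eps$, and the missing power goes the \emph{wrong} way, since $|x_i|\le 1$ gives $|x_i|^{p-2} \ge |x_i|^{p-1}$ and likewise $(\hat\eps/\lambda_i)^{p-2} \ge (\hat\eps/\lambda_i)^{p-1}$, so no constant depending only on $p$ closes this. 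Moreover the per-index $\ell_2$ contribution really is of order $(\hat\eps/\lambda_i)^{p-2}\hat\eps$ rather than $(\hat\eps/\lambda_i)^{p-1}\hat\eps$: take $p=3$, $S = S_2 = \{1\}$, $\lambda_1 = \hat\lambda_1 = 1$, $\hat{\bm v}_1 = c\,\bm v_1 + s\,\bm v_2$ with $c = 1-2\hat\eps^2$ and $s = \sqrt{1-c^2}$, and $\bm x = \bm v_2$; all hypotheses hold, yet $\mb\Delta_1 \bm x^{\otimes 2} = -s^2\,\hat{\bm v}_1$ has norm $s^2 \ge 3\hat\eps^2$, while the right-hand side of \eqref{eqn:lem_S2} is $c_2\hat\eps^3$, a contradiction for small $\hat\eps$ (the same example defeats \eqref{eqn:lem_S_all}). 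So the exponent $p-1$ in those displays should be $p-2$ (equivalently, the square-root forms must be retained), and your claimed elementary simplification would fail exactly where the paper is silent: the paper's appendix proof also stops after establishing \eqref{eqn:deflation_S1} and \eqref{eqn:deflation_S2} and merely asserts the implication. You are therefore not missing an argument the paper supplies, but your write-up commits to a specific manipulation that provably does not work; an honest completion must either prove the corrected $(p-2)$-exponent versions or carry the two-term bounds \eqref{eqn:deflation_S1}--\eqref{eqn:deflation_S2} forward into the deflation analysis.
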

\begin{remark}
Lemma \ref{lem:main} indicates that the accumulating error $\sum_{i \in S} \mb \Delta_i$ much less severely affects vectors that are incoherent with $\set{\bm v_i: i\in S}$. For instance, $\norm{\sum_{i \in S} \mb \Delta_i \bm v_i^{\otimes p-1}}{} = O(\hat \eps^2)$ for $i \in [n]\setminus S$, while $\norm{\sum_{i \in S} \mb \Delta_i \bm v_i^{\otimes p-1}}{} = O(\hat \eps)$ for $i \in S$.
\end{remark}
\subsection{Proof of Main Theorem}

We now use Lemma~\ref{lem:main} to prove the main theorem.

\begin{proof}[Proof of Theorem~\ref{thm:main}]
  It suffices to prove that the following property holds for each $i
  \in [n]$:
  \begin{equation}
    \text{there is a permutation $\pi$ on $[i]$ s.t., for all
    $j \in [i]$} ,
    \
    \begin{cases}
      |\lambda_{\pi(j)} - \hat{\lambda}_j|
      \le 2\eps , \ \text{and} \\
      \norm{\bm v_{\pi(j)}-\hat{\bm v}_j}{}
      \le \frac{20\eps}{\lambda_{\pi(j)}} .
    \end{cases}
    \tag{$*$}
    \label{eqn:indhyp}
  \end{equation}
  The proof is by induction.
  The base case of~\eqref{eqn:indhyp} (where $i = 1$) follows directly from
  by Theorem~\ref{thm:one_step}.

  Assume the induction hypothesis ~\eqref{eqn:indhyp} is true
  for some $i \in [n-1]$.
  We will prove that there exists an $l \in [n] \setminus \{\pi(j) : j
  \in [i]\}$ that satisfies
  \begin{flalign}\label{eqn:target}
    |\lambda_l -\hat \lambda_{i+1}| \le 2 \eps, \quad \norm{\bm v_l -
    \hat{\bm v}_{i+1}}{}\le 20 \eps/ \lambda_l.
  \end{flalign}
  To simplify notation, we assume without loss of generality
  (by renumbering indices) that $\pi(j) = j$ for each $j \in [i]$.
  Let $\hat{\bm x} = \sum_{i\in[n]} x_i \bm v_i := \hat{\bm
  v}_{i+1}$ and $\hat \lambda := \hat \lambda_{i+1}$, and further
  assume without loss of generality (again by renumbering indices) that
  \[
    \lambda_{i+1} |x_{i+1}|^{p-2} \ge \lambda_{i+2} |x_{i+2}|^{p-2}
    \ge \dotsb \ge \lambda_n |x_n|^{p-2}.
  \]
  In the following, we will show that $l=i+1$ is an index satisfying
  \eqref{eqn:target}.
  We use the assumption that
  \begin{equation}
    \eps < \min\Braces{
      \frac18 ,\,
      \frac1{2.5+10c_1} ,\,
      \frac1{10(40c_2n)^{1/(p-1)}}
    } \cdot \lambda_{\min}
    \label{eqn:epscond}
  \end{equation}
  (which holds with a suitable choice of $c_0$ in the theorem
  statement).
  Here, $c_1$ and $c_2$ are the constants from Lemma~\ref{lem:main}
  when $\hat\eps = 10\eps$.
  It can be verified that \eqref{eqn:indhyp} implies that the
  conditions for Lemma~\ref{lem:main} are satisfied with this value of
  $\hat\eps$.

  Recall that $\hat\lambda = \wh{\mcb T}_i \hat{\mb x}^{\tensor p}$,
  where
  \[
    \wh{\mcb T}_i
    = \wh{\mcb T} -\sum_{j=1}^i \hat{\lambda}_j \hat{\bm v}_j^{\tensor p}
    = \sum_{j=i+1}^{n} \lambda_j \bm v_j^{\tensor p} + \mcb E +
    \sum_{j=1}^i \mb \Delta_j .
  \]
  We now bound $\hat\lambda$ from above and below.
  For the lower bound, we use \eqref{eqn:lem_S2} from Lemma
  \ref{lem:main} to obtain
  \begin{flalign}
    \hat \lambda
    & = \wh{\mcb T}_i \hat{\bm x}^{\tensor p}
    \ge \max_{j \in [n] \setminus [i]} \wh{\mcb T}_i {\bm v_j}^{\tensor p}
    \ge \lambda_{\max,i} - \eps - c_2n
    \Parens{
      \frac{10\eps}{\lambda_{\min}}
    }^{p-1} \eps
    \ge \lambda_{\max,i} - 1.25\eps
    \label{eqn:thm_low}
  \end{flalign}
  where $\lambda_{\max,i} := \max_{j \in [n] \setminus [i]} \lambda_j$
  and $\lambda_{\min} := \min_{j \in [n]} \lambda_j$; the final
  inequality uses the conditions on $\eps$ in~\eqref{eqn:epscond}.
  For the upper bound, we have
  \begin{flalign}
    \hat \lambda = \wh{\mcb T}_i \hat{\bm x}^{\tensor p}
    & = \sum_{j=i+1}^{n} \lambda_i x_i^p + \mcb E\hat{\bm x}^{\tensor p}
    + \sum_{j=1}^i \mb \Delta_j\hat{\bm x}^{\tensor p}
    \nonumber \\
    & \le \sum_{j=i+1}^n \lambda_j x_j^p
    + \eps
    + 10c_1 \sum_{j=1}^i |x_j|^{p-1} \eps
    + 10c_2 n \Parens{ \frac{10\eps}{\lambda_{\min}} }^{p-1}\eps
    \nonumber \\
    & \le \lambda_{i+1} |x_{i+1}|^{p-2} \sum_{j=i+1}^n x_j^2
    + \eps
    + 10c_1 \eps \sum_{j=1}^i x_j^2
    + 10c_2 n \Parens{ \frac{10\eps}{\lambda_{\min}} }^{p-1}\eps
    \nonumber \\
    & \le \max\Braces{ \lambda_{i+1} |x_{i+1}|^{p-2} ,\, 10c_1 \eps }
    + 1.25\eps
    .
    \label{eqn:thm_up}
  \end{flalign}
  The first inequality above
 follows from~\eqref{eqn:lem_S_all} in
  Lemma~\ref{lem:main}; the third inequality uses the fact that
  $\sum_{j=1}^n x_j^2 = 1$ as well as the conditions on $\eps$
  in~\eqref{eqn:epscond}.
  If the $\max$ is achieved by the second argument $10c_1\eps$, then
  combining~\eqref{eqn:thm_low} and~\eqref{eqn:thm_up} gives
  \[
    (2.5+10c_1)\eps \geq \lambda_{\max,i} \geq \lambda_{\min} ,
  \]
  a contradiction of~\eqref{eqn:epscond}.
  Therefore the $\max$ in~\eqref{eqn:thm_up} must be achieved by
  $\lambda_{i+1} |x_{i+1}|^{p-2}$, and hence
  combining~\eqref{eqn:thm_low} and~\eqref{eqn:thm_up} gives
  \begin{equation*}
    \lambda_{i+1}|x_{i+1}|^{p-2}
    \geq \lambda_{\max,i} - 2.5\eps
    \quad\text{and}\quad
    |\hat\lambda - \lambda_{i+1}| \leq 1.25\eps
    .
  \end{equation*}
  This in turn implies that
  \begin{equation}
    |x_{i+1}| \geq |x_{i+1}|^{p-2}
    \geq 1 - \frac{2.5\eps}{\lambda_{i+1}} , \quad
    \lambda_{i+1}
    \geq \lambda_{\max,i} - 2.5\eps , \quad \text{and} \quad
    x_{i+1}^2
    \geq x_{i+1}^{p-1}
    \geq 1 - \frac{5\eps}{\lambda_{i+1}} .
    \label{eqn:weakbound}
  \end{equation}
  Thus, we have shown that $\hat{\bm x}$ is indeed coherent with $\hat{\bm v}_{i+1}$. Next, we will sharpen the bound for $\norm{\hat{\bm x} - \hat{\bm v}_{i+1}}{}$ by considering the first order optimality condition.

  Since $\hat{\bm x} \in \arg \min_{\norm{\bm x}{2} = 1} \wh{\mcb T}_i
  \bm x^{\tensor p}$, a first-order optimality condition similar to
  \eqref{eqn:1st_order_opt} implies $\hat \lambda = \wh{\mcb T}_i
  \hat{\bm x}^{\tensor p}$.
  Thus
  \begin{flalign*}
    \hat \lambda \hat{\bm x} = \wh{\mcb T}_i \hat{\bm x}^{\tensor p-1} &= \left(\sum_{j=i+1}^n \lambda_j \bm v_j^{\tensor p} + \mcb E + \sum_{j=1}^i \mb \Delta_j\right) \hat{\bm x}^{\tensor p-1} \\
    &= \lambda_{i+1} x_{i+1}^{p-1}\bm v_{i+1}+ \sum_{j=i+2}^n \lambda_j x_j^{p-1} \bm v_j + \mcb E \hat{\bm x}^{\tensor p-1} + \sum_{j=1}^i \mb \Delta_j \hat{\bm x}^{\tensor p-1}.
  \end{flalign*}
  Therefore
  \begin{align}
    \lefteqn{
      \norm{\lambda_{i+1}(\hat{\bm x} - \bm v_{i+1})}{2}
    } \nonumber \\
    & = \norm{(\lambda_{i+1} - \hat \lambda) \hat{\bm x} + ( \hat \lambda \hat{\bm x} - \lambda_{i+1}\bm v_{i+1})}{2} \nonumber \\
    & = \norm{(\lambda_{i+1} - \hat \lambda) \hat{\bm x} + \lambda_{i+1}(x_{i+1}^{p-1}-1)\bm v_{i+1} + \sum_{j=i+2}^n \lambda_j x_j^{p-1} \bm v_j + \mcb E \hat{\bm x}^{\tensor p-1} + \sum_{j=1}^i \mb \Delta_j \hat{\bm x}^{\tensor p-1}}{2} \nonumber \\
    & \le |\lambda_{i+1} - \hat \lambda| + \lambda_{i+1} |x_{i+1}^{p-1}-1| + \norm{\sum_{j=i+2}^n \lambda_j x_j^{p-1} \bm v_j}{2} + \norm{ \mcb E \hat{\bm x}^{\tensor p-1}}{2} + \norm{\sum_{j=1}^i \mb \Delta_j \hat{\bm x}^{\tensor p-1}}{2}. \label{eqn:thm_sum}
  \end{align}
  For the third term in \eqref{eqn:thm_sum},
  we use the fact that $|x_{i+2}| \leq \sqrt{1-x_{i+1}^2}$, the bounds
  from~\eqref{eqn:weakbound} and the conditions on $\eps$
  in~\eqref{eqn:epscond} to obtain
  \begin{align}
     \norm{\sum_{j=i+2}^n \lambda_j x_j^{p-1} \bm v_j}{2}
     & = \Parens{
      \sum_{j=i+2}^n \lambda_j^2 x_j^{2p-2}
     }^{1/2}
     \nonumber \\
     & \le \lambda_{i+2}|x_{i+2}|^{p-2}\sqrt{1-x_{i+1}^2}
     \nonumber \\
     & \le \lambda_{i+2}(1-x_{i+1}^2)
     \nonumber \\
     & \le \lambda_{\max,i} \frac{5\eps}{\lambda_{i+1}}
     \nonumber \\
     & \le \frac{5\eps}{1 - 2.5\eps/\lambda_{\max,i}}
     \nonumber \\
     & \le 7.5\eps
     .
     \label{eqn:third_term}
  \end{align}
  For the last term in \eqref{eqn:thm_sum}, we
  use~\eqref{eqn:lem_S_all} from Lemma~\ref{lem:main} and the
  conditions on $\eps$ in~\eqref{eqn:epscond} to get
  \begin{flalign}
    \norm{\sum_{j=1}^i \mb \Delta_j \hat{\bm x}^{\tensor p-1}}{2}
    & \le 10c_1 \sum_{j=1}^n |x_j|^{p-1} \eps + 10c_2 n
    \Parens{\frac{10\eps}{\lambda_{\min}}}^{p-1} \eps  \nonumber \\
    & \le 10c_1 (1 - x_{i+1}^2)\eps
    + 10c_2 n \Parens{\frac{10\eps}{\lambda_{\min}}}^{p-1} \eps
    \nonumber \\
    & \le \frac{50c_1}{\lambda_{i+1}}\eps^2
    + 0.25\eps
    \nonumber \\
    & \le 5.25\eps
    .
    \label{eqn:last_term}
  \end{flalign}
  Therefore, substituting \eqref{eqn:weakbound}, \eqref{eqn:last_term}
  and $\norm{\mcb E}{} \le \eps$ into \eqref{eqn:thm_sum} gives
  \[
    \norm{\lambda_{i+1}(\hat{\bm x} - \bm v_{i+1})}{2} \le 20 \eps .
  \]
\end{proof}
\subsection{Stability of Full Decomposition}
Theorem \ref{thm:main} states a (perhaps unexpected) phenomenon that the approximation errors do not accumulate with iteration number, whenever the perturbation error is small enough. In this subsection, we numerically corroborate this fact.

We generate nearly symmetric orthogonally decomposable tensors
$\wh{\mcb T} = \mcb T + \mcb E \in \reals^{10\times 10 \times 10}$ as follows.
We construct the underlying symmetric orthogonally decomposable tensor $\mcb
T$ as the diagonal tensor with all diagonal entries equal to one,
i.e., $\mcb T = \sum_{i=1}^{10} \bm e_i^{\tensor 3}$ (where $\bm e_i$
is the $i$-th coordinate basis vector).
The perturbation tensor $\mcb E$ is generated under three random
models with $\sigma = 0.01$:
\begin{description}
  \item[Binary:]
    independent entries $\mc E_{i,j,k} \in \{\pm\sigma\}$ uniformly at
    random;

  \item[Uniform:]
    indepedent entries $\mc E_{i,j,k} \in [-2\sigma,2\sigma]$
    uniformly at random;

  \item[Gaussian:]
    independent entries $\mc E_{i,j,k} \sim \mc N(0,\sigma^2)$.

\end{description}

For each random model, we generate $500$ random instances, and apply
Algorithm~\ref{alg:rank-1} to each $\wh{\mcb T}$ to obtain approximate
pairs $\{(\hat \lambda_i, \hat{\bm v}_i)\}_{i\in[10]}$.
Again, we use \textsf{GloptiPoly~3} to solve the polynomial
optimization problem in Algorithm~\ref{alg:rank-1}.

In Figure~\ref{fig:full_deflation}, we plot the mean and the standard
deviation of the approximation errors for $\hat\lambda_i$ and $\hat{\bm
v}_i$ from the $500$ random instances (for each $i\in [10]$).
These indeed do not appear to grow or accumulate as the
iteration number increases.
This is consistent with our results in Theorem~\ref{thm:main}.

\begin{figure*}
  \begin{center}
    \setlength{\tabcolsep}{0in}
    \begin{tabular}{cc}
      \includegraphics[height=1.5in]{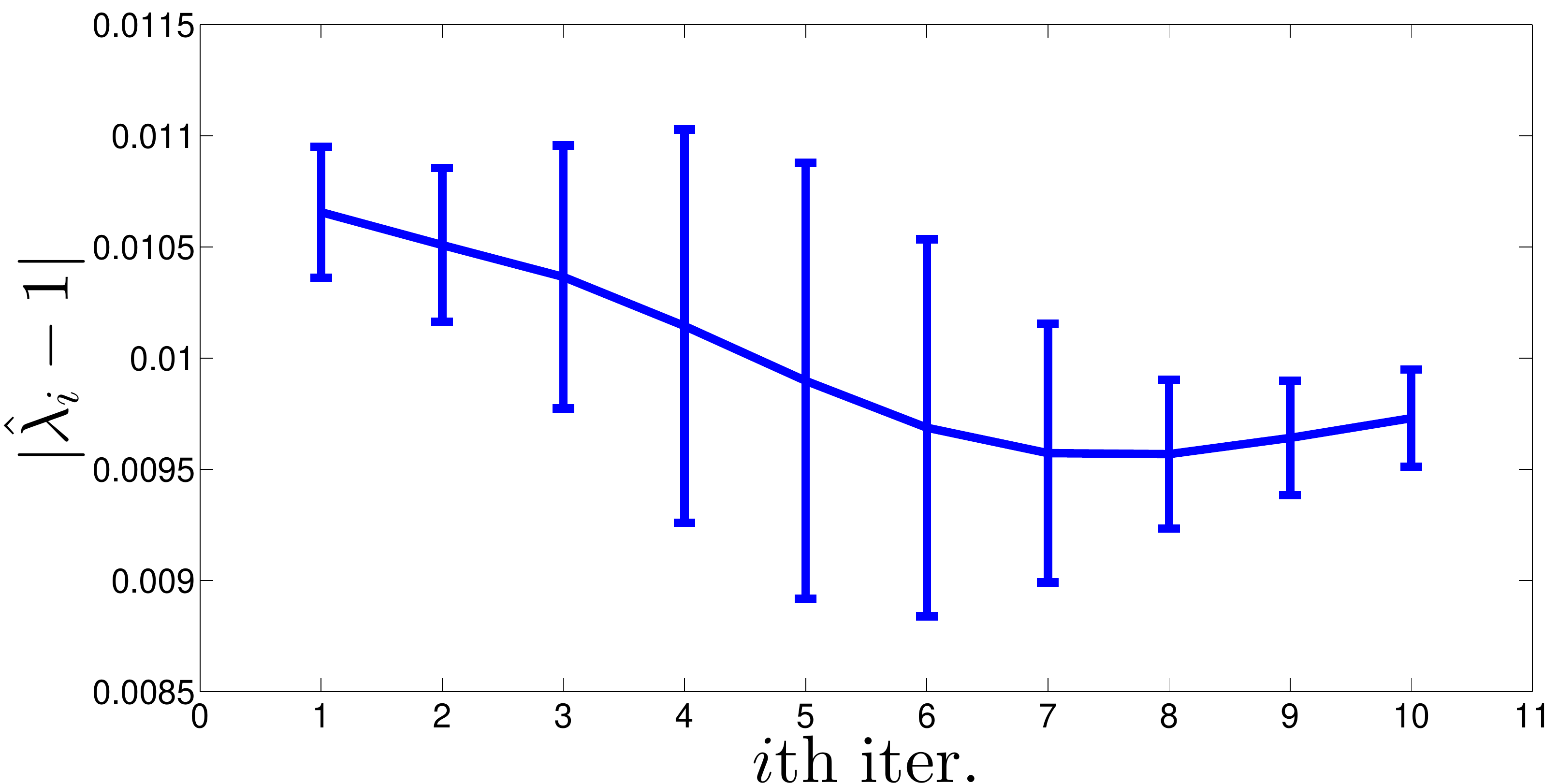} \hspace{5mm}&\hspace{5mm}
      \includegraphics[height=1.5in]{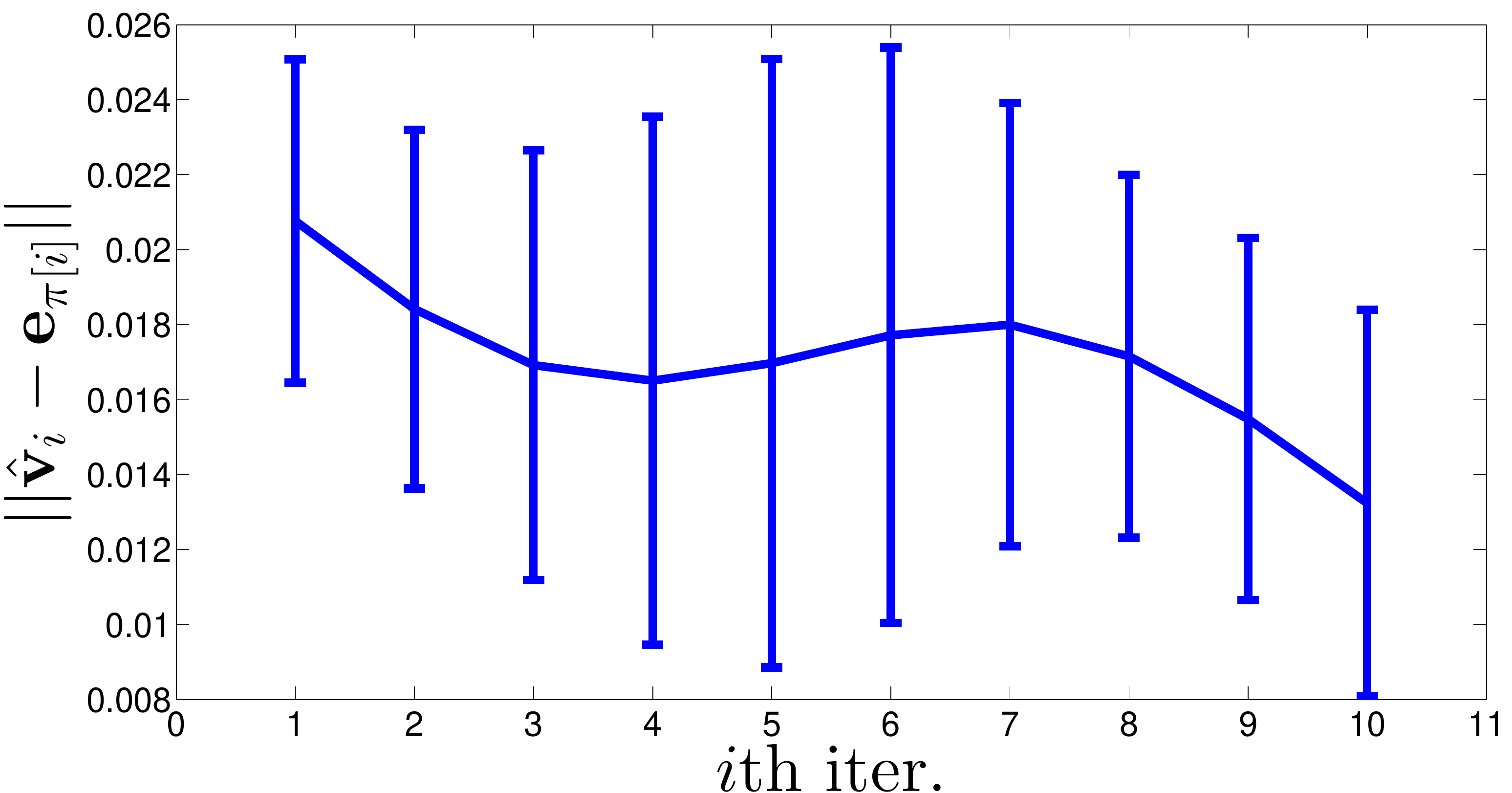}
      \\
      \multicolumn{2}{c}{\emph{Perturbation: binary}}
      \\
      \\
      \\
      \includegraphics[height=1.5in]{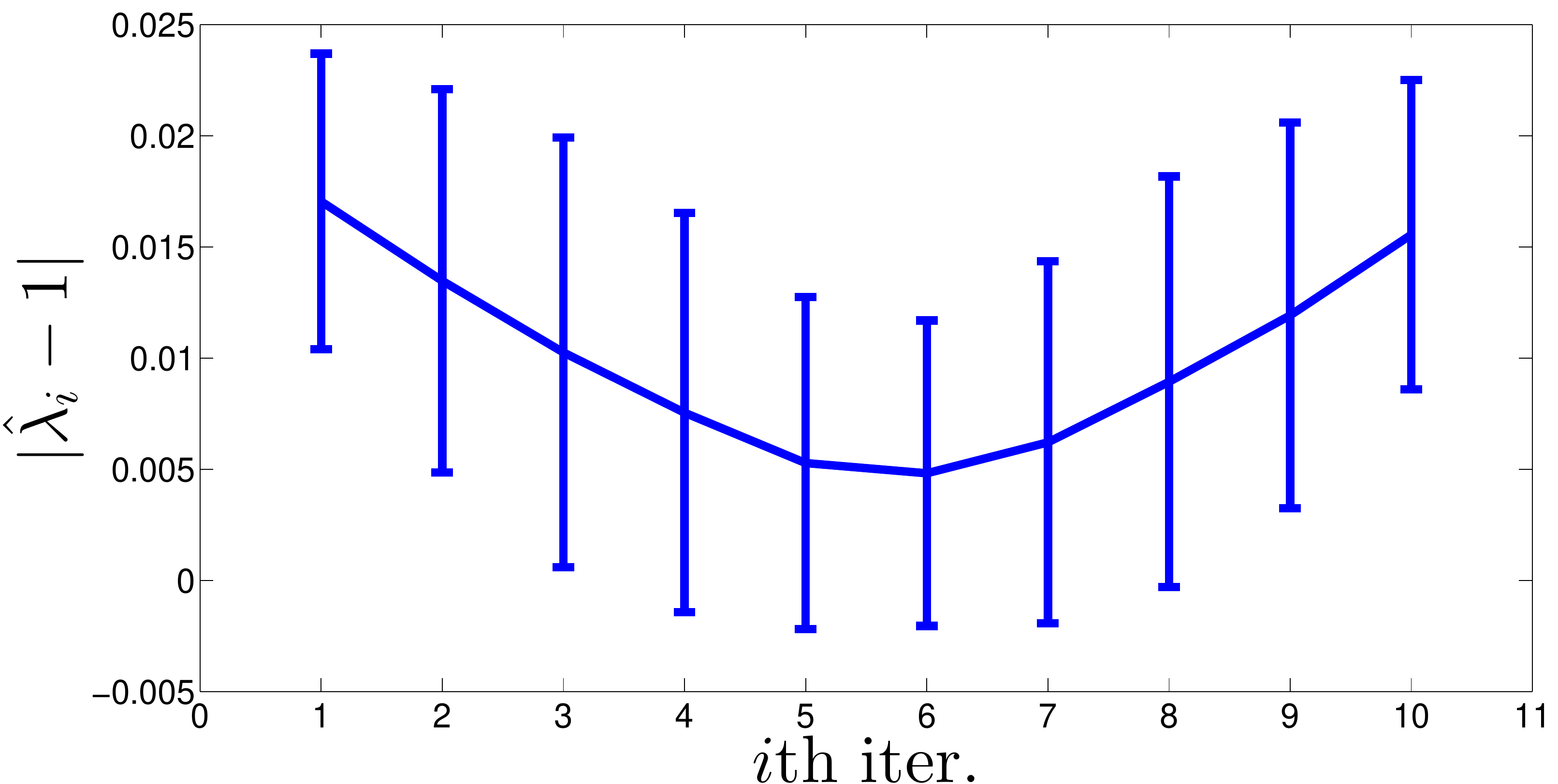} \hspace{5mm}&\hspace{5mm}
      \includegraphics[height=1.5in]{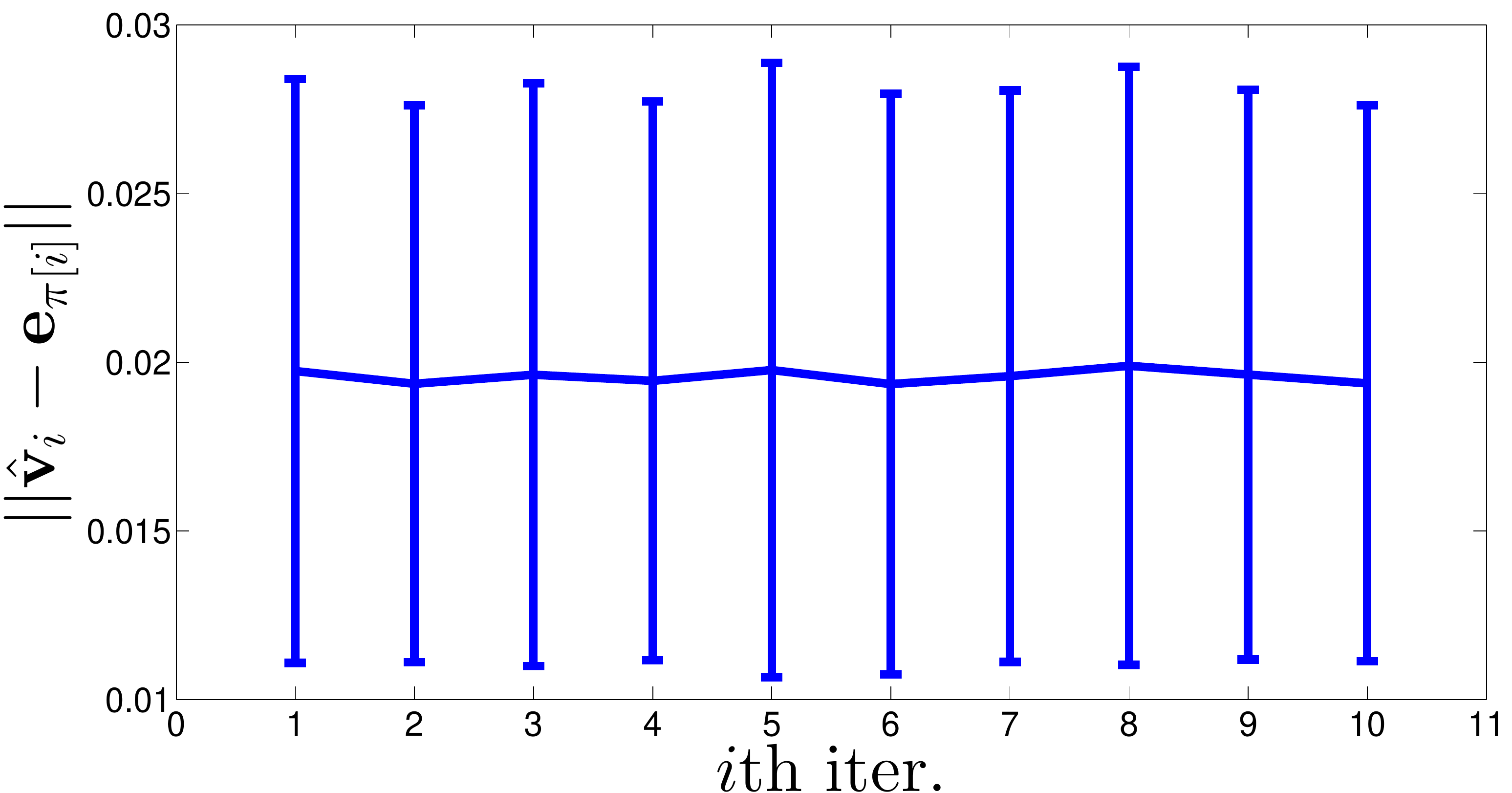}
      \\
      \multicolumn{2}{c}{\emph{Perturbation: uniform}}
      \\
      \\
      \\
      \includegraphics[height=1.5in]{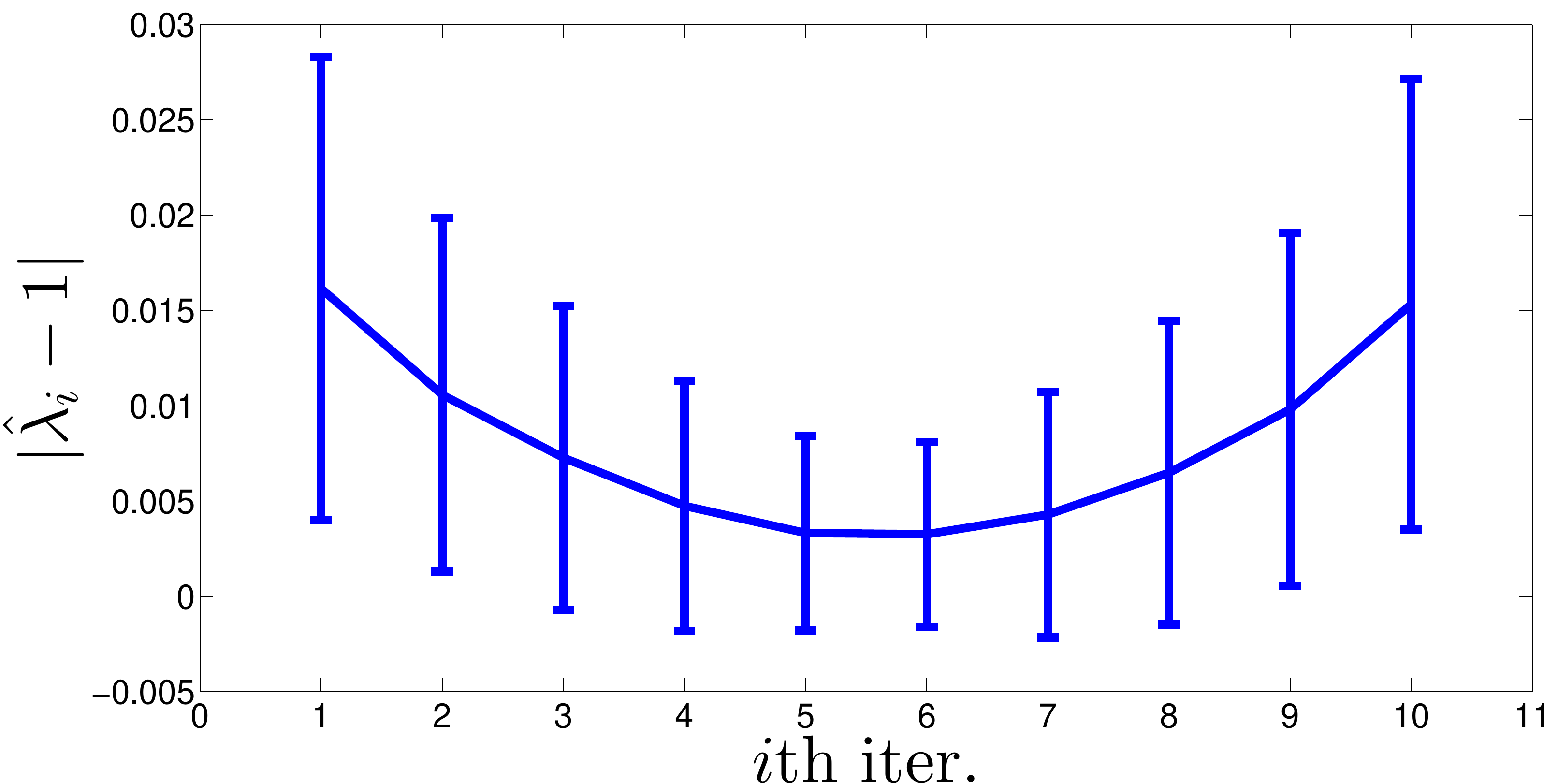}
      \hspace{5mm}&\hspace{5mm}
      \includegraphics[height=1.5in]{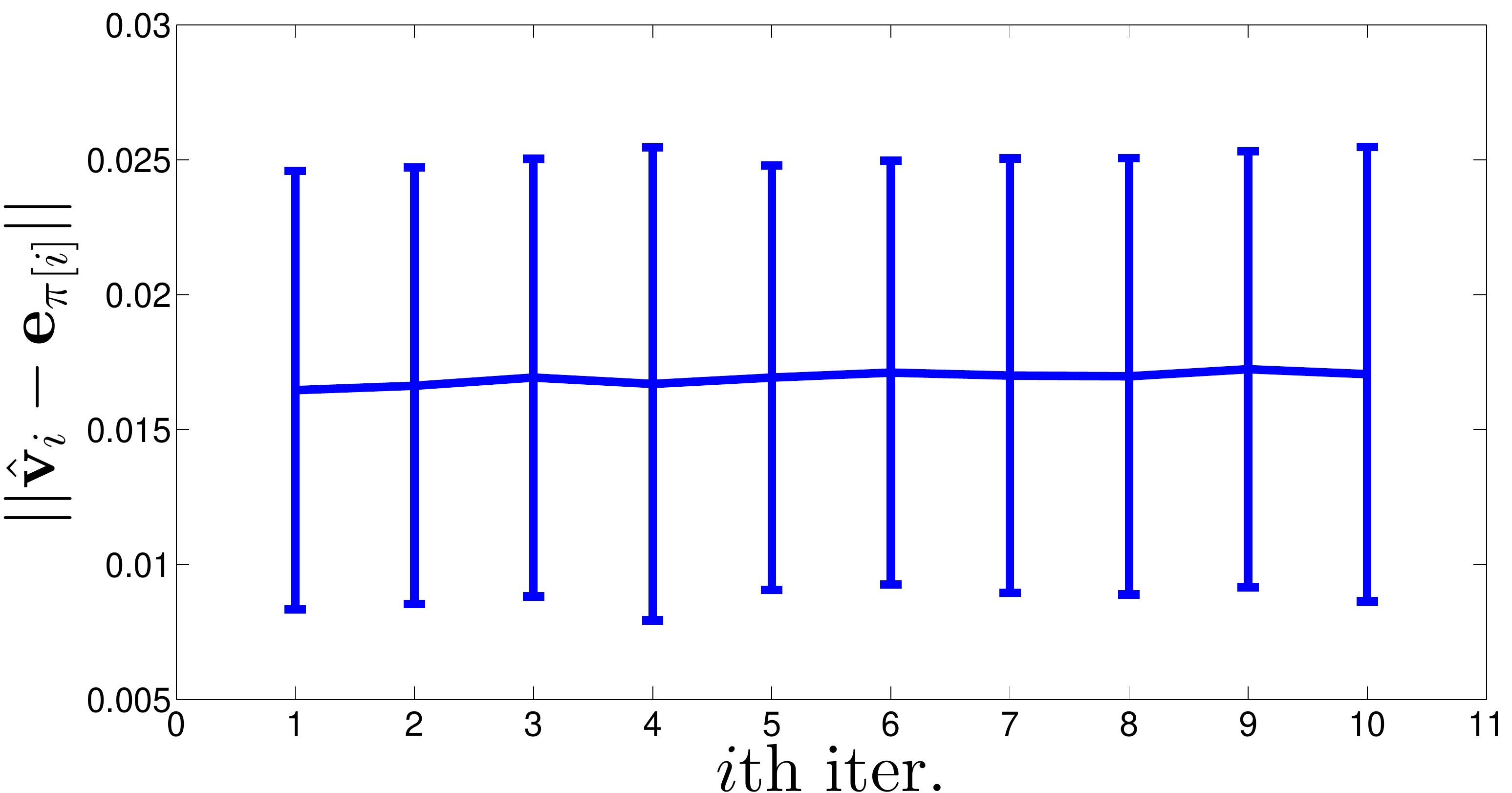}
      \\
      \multicolumn{2}{c}{\emph{Perturbation: Gaussian}}
      \\
      \\
      \\
    \end{tabular}
\caption{%
  \textbf{Approximation Errors of Algorithm~\ref{alg:rank-1}.}
  For each vertical bar over the iteration index $i\in [10]$, the midpoint is
  the mean of the approximation errors of $\hat \lambda_i$ (left) and
  $\hat{\bm v}_i$ (right), computed over 500 randomly generated instances.
  The error bars extend to two standard deviations above and below the
  mean.%
}
\label{fig:full_deflation}
  \end{center}
\end{figure*}

\subsection{When $p$ is Even}
\label{sec:even}

We now briefly discuss the case where the order of the tensor is even,
i.e., $p\ge 4$ is an even integer.

Let $\wh{\mcb T} := \mcb T + \mcb E \in \Tensor^p \reals^n$, where $\mcb T$ is a symmetric tensor with orthogonal decomposition $\mcb T = \sum_{i=1}^n \lambda_i \bm v_i^{\tensor p}$, where $\set{\bm v_1, \bm
v_2, \dotsc, \bm v_n}$ is an orthonormal basis of $\reals^n$, $\lambda_i \neq 0$ for all $i\in [n]$, and $\mcb E$ is a symmetric tensor with operator norm $\eps:=\norm{\mcb E}{}$. Note that unlike the case when $p$ is odd, we cannot assume $\lambda_i > 0$ for all $i \in [n]$, and correspondingly, line \ref{line:rank_one_approx} in Algorithm \ref{alg:rank-1} now becomes
\begin{flalign*}
  \hat{\bm v}_i \in \argmax_{\norm{\bm v}{}=1} \left|\wh{\mcb T}_{i-1} \bm
  v^{\tensor p}\right| =  \argmax_{\norm{\bm v}{}=1} \; \max\left\{\wh{\mcb T}_{i-1} \bm
  v^{\tensor p}, -\wh{\mcb T}_{i-1} \bm
  v^{\tensor p}\right\}, \quad
  \hat \lambda_i = \wh{\mcb T}_{i-1} \hat{\bm v}_i^{\tensor p} .
\end{flalign*}
Nevertheless, the pair $(\hat \lambda_i, \hat{\bm v}_i)$ still satisfies the first-order optimality condition $\hat \lambda_i \hat{\bm v}_i = \wh{\mcb T} \hat{\bm v}_i^{\tensor p-1}$.

Our proof for Theorem \ref{thm:main} can be easily modified and leads to the following result: there exists a positive constant $\hat c_0 = \hat c_0(p) >0$ such that whenever $\eps \le \hat c_0 \left(\min_{i\in [n]} |\lambda_i|\right)/n^{1/(p-1)}$,   there exists a permutation $\pi$ on $[n]$ such that
  \[
    |\lambda_{\pi(j)} - \hat{\lambda}_j| \le 2\eps, \qquad  \min\left\{\norm{\bm
    v_{\pi(j)}-\hat{\bm v}_j}{}, \norm{\bm
    v_{\pi(j)}+\hat{\bm v}_j}{} \right\}\le 20\eps/|\lambda_{\pi(j)}|  , \quad
    \forall j \in [n].
  \]

\section{Conclusion}
This paper sheds light on a problem at the intersection of numerical
linear algebra and statistical estimation, and our results draw upon
and enrich the literature in both areas.

From the perspective of numerical linear algebra, SROA was previously
only known to exactly recover the symmetric canonical decomposition of
an orthogonal decomposable tensor.
Our results show that it can robustly recover (approximate) orthogonal
decompositions even when applied to nearly SOD tensors; this
substantially enlarges the applicability of SROA.

Previous work on statistical estimation via orthogonal tensor
decompositions considered the specific randomized power iteration
algorithm of~\citet{JMLR:v15:anandkumar14b}, which has been
successfully applied in a number of
contexts~\cite{chaganty2013linear,ZHPA13-contrast,azar2013bandit,huang2013hmm,AGHK14-community,doshi2014graph}.
Our results provide formal justification for using other rank-one
approximation methods in these contexts, and it seems to be quite beneficial, in terms of sample complexity and statistical efficiency, to use more sophisticated methods. Specifically, the perturbation error $\norm{\mcb E}{}$ that can be tolerated is relaxed from power iteration's $O(1 / n)$ to $O(1/\sqrt[p-1]{n})$.
In future work, we plan to empirically investigate these potential
benefits in a number of applications.

We also note that solvers for rank-one tensor approximation often lack rigorous runtime
or error analyses, which is not surprising given the computational
difficulty of the problem for general tensors~\citep{hillar2009most}.
However, tensors that arise in applications are often more
structured, such as being nearly SOD.
Thus, another promising future research direction is to sidestep computational
hardness barriers by developing and analyzing methods for such specially structured
tensors (see also \cite{JMLR:v15:anandkumar14b, barak2014dictionary} for ideas along this line).

\section*{Acknowledgements}
Daniel Hsu acknowledges support from a Yahoo ACE award.
Donald Goldfarb acknowledges support from NSF Grants DMS-1016571 and CCF-1527809.

\appendix
\section{Proof of Theorem~\ref{thm:matrix_pert}}
\label{sec:matrix_pert}

Since $\wh{\mb M}$ is symmetric, it has an eigenvalue decomposition
$\sum_{i=1}^n \hat\lambda_i \hat{\mb v}_i \hat{\mb v}_i^\t$, where
$|\hat\lambda_1| \ge |\hat\lambda_2| \ge \dotsb \ge |\hat\lambda_n|$
and $\{ \hat{\bm v}_1, \hat{\bm v}_2, \dotsc, \hat{\bm v}_n \}$ are
orthonormal.
It is straightforward to obtain:
\[
  \hat\lambda = \hat\lambda_1
  \quad\text{and}\quad
  \wh{\mb M} \hat{\mb x} = \hat\lambda \hat{\mb x} .
\]
By Weyl's inequality~\cite{weyl1912asymptotische},
\[
  |\hat\lambda - \lambda_1| \leq \norm{\mb E}{} = \eps .
\]
To bound $\innerprod{\hat{\bm x}}{\bm v_1}^2$, we employ an
argument very similar to one from~\cite{davis1970rotation}.
Observe that
\[
  \norm{\mb M \hat{\mb x} - \lambda_1 \hat{\mb x}}{}^2
  = \norm{(\hat\lambda - \lambda_1) \hat{\mb x} - \mb E\hat{\mb
  x}}{}^2 \leq (|\hat\lambda - \lambda_1| \norm{\hat{\mb x}}{}
  + \norm{\mb E\hat{\mb x}}{})^2
  \leq 4\eps^2
  .
\]
Moreover,
\[
  \mb M \hat{\mb x} - \lambda_1 \hat{\mb x}
  = \sum_{i=1}^n (\lambda_i - \lambda_1)
  \innerprod{\mb v_i}{\hat{\mb x}} \mb v_i
  = \sum_{i=2}^n (\lambda_i - \lambda_1)
  \innerprod{\mb v_i}{\hat{\mb x}} \mb v_i
  ,
\]
and therefore
\[
  \norm{\mb M \hat{\mb x} - \lambda_1 \hat{\mb x}}{}^2
  =
  \sum_{i=2}^n (\lambda_i - \lambda_1)^2
  \innerprod{\mb v_i}{\hat{\mb x}}^2
  \geq
  \gamma^2
  \sum_{i=2}^n
  \innerprod{\mb v_i}{\hat{\mb x}}^2
  =
  \gamma^2
  (1 - \innerprod{\mb v_i}{\hat{\mb x}}^2)
  .
\]
Combining the upper and lower bounds on $\norm{\mb M \hat{\mb x} -
\lambda_1 \hat{\mb x}}{}^2$ gives $\innerprod{\hat{\bm x}}{\bm v_1}^2
\geq 1 - (2\eps/\gamma)^2$ as claimed. \qed

\section{Proof of Lemma~\ref{lem:main}} \begin{proof}
  The lemma holds trivially if $\hat\eps = 0$.
  So we may assume $\hat\eps>0$.
  Therefore, for every $i \in S_1$, we have $|x_i| \ge 4\hat \eps/\lambda_i>0$.
  Let $c_i := \innerprod{\bm v_i}{\hat{\bm v}_i}$,
  $\bm w_i := (\hat{\bm v}_i - c_i \bm v_i)/\norm{\hat{\bm v}_i - c_i
  \bm v_i}{2}$, and
  $y_i := \innerprod{\bm w_i}{\bm x}$, so
  \[
    \hat{\bm v}_i = c_i \bm v_i + \sqrt{1-c_i^2} \bm w_i
    \quad \text{and} \quad
    \innerprod{\hat{\bm v}_i}{\bm x} = c_i x_i + \sqrt{1-c_i^2}y_i .
  \]

  We first establish a few inequalities that will be frequently used
  later.
  Since $|\lambda_i - \hat \lambda_i| \le \hat \eps \le \lambda_i/2$,
  one has $\hat \eps/ \lambda_i \le 1/2$, and $1/2 \le \hat \lambda_i
  / \lambda_i \le 3/2$.
  Also, since $c_i \ge 1 - 2(\hat \eps/ \lambda_i)^2\ge 1/2$,
  \begin{flalign*}
    \sqrt{1-c_i^2}
    & \le \sqrt{1- \left(1 - 2\left(\hat \eps/ \lambda_i\right)^2 \right)^2}
    = \sqrt{4 \left( \hat \eps/\lambda_i \right)^2\left(1 - \left(
    \hat \eps/\lambda_i \right)^2 \right)}
    \le 2\hat \eps/\lambda_i .
  \end{flalign*}

  For each $i\in S$,
  \begin{align*}
    &\mb{\Delta}_i \bm x^{\otimes p-1} \\
    & =
    \Parens{
      \lambda_i \bm v_i^{\otimes p}
      - \hat{\lambda}_i \hat{\bm v}_i^{\otimes p}
    }
    \bm x^{\otimes p-1} \nonumber \\
    & = \lambda_i x_i^{p-1} \bm v_i - \hat \lambda_i
    \innerprod{\hat{\bm v}_i}{\bm x}^{p-1} \hat{\bm v}_i
    \\
    & = \lambda_i x_i^{p-1} \bm v_i - \hat \lambda_i
    \Parens{
      c_i x_i + \sqrt{1-c_i^2}y_i
    }^{p-1}
    \Parens{
      c_i \bm v_i + \sqrt{1-c_i^2} \bm w_i
    }
    \\
    & =
    \Parens{
      \lambda_i x_i^{p-1} - \hat \lambda_i c_i
      \Parens{
        c_i x_i + \sqrt{1-c_i^2} y_i
      }^{p-1}
    }
    \bm v_i -
    \Parens{
      \hat \lambda_i \sqrt{1-c_i^2}
      \Parens{
        c_i x_i + \sqrt{1-c_i^2} y_i
      }^{p-1}
    }
    \bm w_i.
  \end{align*}
  Therefore, due to the orthonormality of $\{\bm v_i\}_{i\in[n]}$ and
  the triangle inequality, for each $j \in \{1,2\}$,
  \begin{equation}
    \begin{aligned}
      \norm{\sum_{i \in S_j} \mb{\Delta}_i \bm x^{\otimes p-1}}{2}
      & \le
      \Parens{
        \sum_{i \in S_j}
        \Parens{
          \lambda_i x_i^{p-1} - \hat \lambda_i c_i
          \Parens{
            c_i x_i + \sqrt{1-c_i^2} y_i
          }^{p-1}
        }^2
      }^{1/2}
      \\
      & \quad
      + \sum_{i \in S_j}
      \Abs{
        \hat \lambda_i \sqrt{1-c_i^2}
        \Parens{
          c_i x_i + \sqrt{1-c_i^2} y_i
        }^{p-1}
      }
      .
    \end{aligned}
    \label{eqn:S_j}
  \end{equation}

  We now prove \eqref{eqn:deflation_S1}.
  For any $i \in S_1$, since $x_i \neq 0$, we may
  write~\eqref{eqn:S_j} as
  \begin{flalign}
    \begin{aligned} \label{eqn:bound_S1}
      \norm{\sum_{i \in S_1} \mb{\Delta}_i \bm x^{\otimes p-1}}{2}
      & \le
      \Parens{
        \sum_{i \in S_1} x_i^{2p-4}
        \Parens{
          \lambda_i x_i - \hat\lambda_i x_i c_i^p
          \Parens{
            1+\sqrt{\frac{1-c_i^2}{c_i^2}}\frac{y_i}{x_i}
          }^{p-1}
        }^2
      }^{1/2}  \\
      & \quad
      + \sum_{i \in S_1}
      \Abs{
        \hat \lambda_i x_i^{p-1} c_i^{p-1}
        \sqrt{1-c_i^2}
        \Parens{
          1+\sqrt{\frac{1-c_i^2}{c_i^2}}\frac{y_i}{x_i}
        }^{p-1}
      }
      .
    \end{aligned}
  \end{flalign}
  Observe that
  \[
    \Abs{
      \sqrt{\frac{1-c_i^2}{c_i^2}} \frac{y_i}{x_i}
    }
    \le \frac{\sqrt{1-c_i^2}}{|c_i|}\frac{1}{|x_i|} \le \frac{4 \hat
    \eps}{\lambda_i |x_i|}
    \le 1
  \]
  because $|c_i| \ge 1/2$ and $\sqrt{1 - c_i^2} \ge 2 \hat \eps/\lambda_i$.
  Moreover, since $1+ (p-1)z \le (1+z)^{p-1} \le 1 + (2^{p-1}-1)z$ for
  any $z \in [0,1]$,
  \begin{flalign}\label{eqn:S1_first_p-1}
    \Abs{
      \Parens{
        1 + \sqrt{\frac{1-c_i^2}{c_i^2}} \frac{y_i}{x_i}
      }^{p-1} - 1
    }
    \le (2^{p-1}-1) \frac{4 \hat \eps}{\lambda_i |x_i|} = (2^{p+1} -4)
    \frac{\hat \eps}{\lambda_i |x_i|}
    .
  \end{flalign}
  Therefore,
  \begin{flalign}
    \Abs{
      \lambda_i x_i - \hat{\lambda}_ix_i c_i^p
      \Parens{
        1+ \sqrt{\frac{1-c_i^2}{c_i^2}} \frac{y_i}{x_i}
      }^{p-1}
    }
    & \le
    |\lambda_i x_i - \hat{\lambda}_i x_i|
    + \hat \lambda_i |x_i|
    \Abs{
      1 -  c_i^p
      \Parens{
        1+ \sqrt{\frac{1-c_i^2}{c_i^2}} \frac{y_i}{x_i}
      }^{p-1}
    }
    \nonumber \\
    & \le \hat \eps + \frac{\hat \lambda_i |x_i|}{\lambda_i}
    \Parens{
      (2^{p+1}-4)\frac{ \hat \eps}{|x_i|} + p \hat \eps + (2^{p+1} -4)
      \frac{ \hat \eps}{|x_i|} \frac{p \hat \eps}{\lambda_i}
    }
    \nonumber \\
    & \le \hat \eps + \frac{3}{2}
    \Parens{
      (2^{p+1}-4) + p + (2^{p-1}-1) p
    }
    \hat \eps
    \nonumber \\
    & \le 2^{p+1}p \hat \eps.
    \label{eqn:S1_first_dev}
  \end{flalign}
  The second inequality above is obtained using the inequality
  $|(1+a)(1+b)-1| \le |a|+|b|+|ab|$ for any $a,b \in\reals$, together
  with the inequality from~\eqref{eqn:S1_first_p-1} and the fact
  $|1-c_i^p| \le 2p(\hat \eps/\lambda_i)^2 \le p(\hat
  \eps/\lambda_i)$.
  Using the resulting inequality in~\eqref{eqn:S1_first_dev},
  the first summand in \eqref{eqn:bound_S1} can be bounded as
  \begin{equation}
    \Parens{
      \sum_{i \in S_1} x_i^{2p-2}
      \Parens{
        \lambda_i - \hat \lambda_i c_i^p
        \Parens{
          1+\sqrt{\frac{1-c_i^2}{c_i^2}}\frac{y_i}{x_i}
        }^{p-1}
      }^2
    }^{1/2}
    \le 2^{p+1}p
    \Parens{
      \sum_{i\in S_1} x_i^{2(p-2)}
    }^{1/2}
    \hat\eps
    .
    \label{eqn:S1_first}
  \end{equation}
  To bound the second summand in \eqref{eqn:bound_S1}, we have
  \begin{flalign}
    \sum_{i \in S_1}
    \Abs{
      \hat\lambda_i x_i^{p-1} c_i^{p-1} \sqrt{1-c_i^2}
      \Parens{
        1+\sqrt{\frac{1-c_i^2}{c_i^2}}\frac{y_i}{x_i}
      }^{p-1}
    }
    & \le \sum_{i \in S_1}
    \Abs{
      \hat \lambda_i x_i^{p-1} \sqrt{1-c_i^2}
      \Parens{
        1+\frac{\sqrt{1-c_i^2}}{c_i} \frac{1}{|x_i|}
      }^{p-1}
    }
    \nonumber \\
    & \le \sum_{i \in S_1}
    \Abs{
      \hat \lambda_i x_i^{p-1} \frac{2 \hat \eps}{\lambda_i}
      \Parens{
        1+ \frac{4\hat \eps}{\lambda_i |x_i|}
      }^{p-1}
    }
    \nonumber \\
    & \le 2^{p+1} \sum_{i\in S_1} |x_i|^{p-1} \hat \eps
    ,
    \label{eqn:S1_second}
    .
  \end{flalign}
  The second inequality uses the facts $c_i \ge 1/2$ and
  $\sqrt{1-c_i^2} \le 2 \hat \eps / \lambda_i$; the last inequality
  uses the facts $\hat \lambda_i/\lambda_i \le 3/2$ and $\lambda_i
  |x_i| \ge 4 \hat \eps$.
  Combining \eqref{eqn:S1_first} and \eqref{eqn:S1_second} gives the
  claimed inequality in \eqref{eqn:deflation_S1} via
  \eqref{eqn:bound_S1}.

  It remains to prove \eqref{eqn:deflation_S2}.
  For each $i \in S_2$,
  \begin{flalign*}
    \Abs{
      \lambda_i x_i^{p-1} - \hat \lambda_i c_i
      \Parens{
        c_i x_i + \sqrt{1-c_i^2} y_i
      }^{p-1}
    }
    & \le \lambda_i|x_i|^{p-1} + \hat\lambda_i
    \Parens{ |x_i|+\sqrt{1-c_i^2} }^{p-1}
    \\
    & \le \lambda_i
    \Parens{ \frac{4\hat \eps}{\lambda_i} }^{p-1}
    + \hat\lambda_i
    \Parens{
      \frac{4 \hat \eps}{\lambda_i} + \frac{2 \hat \eps}{\lambda_i}
    }^{p-1}
    \\
    & \le
    \Parens{
      4^{p-1}+\frac{3}{2} \cdot 6^{p-1}
    }
    \Parens{
      \frac{\hat \eps}{\lambda_i}
    }^{p-2}
    \hat\eps
    \\
    & \le 6^p
    \Parens{
      \frac{\hat \eps}{\lambda_i}
    }^{p-2} \hat\eps
    .
  \end{flalign*}
  The second inequality uses the facts $\sqrt{1-c_i^2} \le 2 \hat
  \eps/\lambda_i$ and $\lambda_i |x_i| < 4 \hat \eps$ for all $i\in
  S_2$; the third inequality uses the fact $\hat \lambda_i / \lambda_i
  \le 3/2$.
  Therefore
  \begin{equation}
    \Parens{
      \sum_{i \in S_2}
      \Parens{
        \lambda_i x_i^{p-1} - \hat \lambda_i c_i
        \Parens{
          c_i x_i + \sqrt{1-c_i^2} y_i
        }^{p-1}
      }^2
    }^{1/2}
    \le 6^p
    \Parens{
      \sum_{i \in S_2}
      \Parens{
        \frac{\hat \eps}{\lambda_i}
      }^{2(p-2)}
    }^{1/2}
    \hat \eps
    .
    \label{eqn:S2_first}
  \end{equation}
  Moreover,
  \begin{flalign}
    \sum_{i \in S_2}
    \Abs{
      \hat \lambda_i \sqrt{1-c_i^2}
      \Parens{
        c_i x_i + \sqrt{1-c_i^2} y_i
      }^{p-1}
    }
    & \le \sum_{i \in S_2}
    \Abs{
      \hat\lambda_i \sqrt{1-c_i^2}
      \Parens{
        |x_i| + \sqrt{1-c_i^2}
      }^{p-1}
    }
    \nonumber \\
    & \le \sum_{i \in S_2} \hat \lambda_i \frac{2 \hat \eps}{\lambda_i}
    \Parens{
      \frac{4 \hat \eps}{\lambda_i} + \frac{2 \hat \eps}{\lambda_i}
    }^{p-1}
    \nonumber \\
    & \le 3 \cdot 6^{p-1}\sum_{i \in S_2}
    \Parens{
      \frac{\hat \eps}{\lambda_i}
    }^{p-1}
    \hat\eps
    \nonumber \\
    & \le 6^p \sum_{i \in S_2}
    \Parens{
      \frac{\hat \eps}{\lambda_i}
    }^{p-1} \hat \eps
    .
    \label{eqn:S2_second}
  \end{flalign}
  Combining \eqref{eqn:S2_first} and \eqref{eqn:S2_second} establishes
  \eqref{eqn:deflation_S2} via \eqref{eqn:S_j} (with $j=2$).
\end{proof}

\bibliographystyle{ieeetr}
\bibliography{rank-1}

\end{document}